\newcommand{\rbose}{\rrbracket}\newcommand{\lbose}{\llbracket}
\setlist{nolistsep}
\newtheorem{theorem}{Theorem}[section]
\newtheorem{lemma}[theorem]{Lemma}
\newtheorem{corollary}[theorem]{Corollary}
\newtheorem{defn}[theorem]{Definition}
\newtheorem{remark}[theorem]{Remark}
\newtheorem{result}[theorem]{Result}
\newenvironment{proof}{\noindent{\bf Proof}\hspace{0.5em}}
    { \null  \hfill $\square$ \par}
\newcommand{\scroll}{\mathscr S}
    \newcommand{\Fq}{\mathbb F_{\!q\!\;}}
\newcommand{\Fqqq}{\mathbb F_{q^3}}
\newcommand{\Fqq}{\mathbb F_{\!q^2\,}}
\newcommand{\plus}{{\raisebox{.2\height}{\scalebox{.5}{+}}}}
\newcommand{\Cplus}{\C^{{\rm \pmb\plus}}}
\newcommand{\si}{\Sigma_\infty}
\newcommand\B{{\pi}}
\newcommand\bs{{b}}
\newcommand\C{{\mathcal  C}}
\newcommand{\Q}{\mathscr  Q}
\renewcommand{\O}{\mathscr O}
\newcommand{\Pit}{\Gamma}
\newcommand\U{{\cal U}}
\newcommand{\R}{\mathcal R}
\newcommand\V{{\cal V}}
\newcommand\I{{\cal I}}
\newcommand\W{{\cal W}}
\newcommand\F{{\cal F}}
\newcommand\N{{\cal N}}
\renewcommand{\S}{\mathcal S}
\newcommand{\K}{\mathcal K}
\newcommand{\VO}{\V(\Bo{\O})}
\newcommand{\VC}{\V(\Bo{\C})}
\newcommand{\VCplus}{\V(\Bo{\Cplus})}
\newcommand{\Vb}{\V(\Bo{\bs})}
\newcommand{\VB}{\V(\Bo{\B})}
\newcommand{\li}{\ell_\infty}
\newcommand{\st}{\,|\,}
\newcommand\PGammaL{{\mbox{P}\Gamma {\mbox L}}}
\newcommand\PGL{{\rm PGL}}
\newcommand\PG{{\rm PG}}
\renewcommand{\star}{{^{\mbox{\tiny\ding{73}}}}}
\newcommand\Bo[1]{\mbox{$\lbose#1\rbose$}}
\newcommand{\gstar}{g^{\mbox{\tiny\ding{72}}}}
\newcommand{\gqstar}{{g^q}^{\mbox{\tiny\ding{72}}}}
\newcommand{\Pirstar}{{\Pi}^{\mbox{\tiny\ding{73}}}_r}
\newcommand{\Pirstarstar}{{\Pi}^{\mbox{\tiny\ding{72}}}_r}
\newcommand{\Pigstar}{{\Pi}^{\mbox{\tiny\ding{73}}}_g}
\newcommand{\PiUstar}{{\Pi}^{\mbox{\tiny\ding{73}}}_{\U}}
\newcommand{\Sigmabstar}{{\Sigma}^{\mbox{\tiny\ding{73}}}_b}
\newcommand{\Sonetwostar}{{\S}^{\mbox{\tiny\ding{73}}}_{1;2}}
\newcommand{\newQonestar}{{\Q}^{\mbox{\tiny\ding{73}}}_1}
\newcommand{\newQthreestar}{{\Q}^{\mbox{\tiny\ding{73}}}_3}
\newcommand{\newQfourstar}{{\Q}^{\mbox{\tiny\ding{73}}}_4}
\newcommand{\newQfivestar}{{\Q}^{\mbox{\tiny\ding{73}}}_5}
\newcommand{\newQtwostar}{{\Q}^{\mbox{\tiny\ding{73}}}_2}
\newcommand{\Nthreestar}{{\N}^{\mbox{\tiny\ding{73}}}_3}
\newcommand{\Nfourstar}{{\N}^{\mbox{\tiny\ding{73}}}_4}
\newcommand{\Ntwostar}{{\N}^{\mbox{\tiny\ding{73}}}_2}
\newcommand{\sistar}{{\Sigma}_\infty^{\mbox{\tiny\ding{73}}}}
\newcommand{\sistarstar}{{\Sigma}_\infty^{\mbox{\tiny\ding{72}}}}
\renewcommand{\star}{{^{\mbox{\tiny\ding{73}}}}}
\newcommand{\blackstar}{{^{\mbox{\tiny\ding{72}}}}}
\newcommand{\Kstar}{{\K^{\mbox{\tiny\ding{73}}}}}
\newcommand{\Kstarstar}{{\K^{\mbox{\tiny\ding{72}}}}}
\newcommand{\mstar}{{m^{\mbox{\tiny\ding{73}}}}}
\newcommand{\SC}{\mathscr S}
\newcommand{\IBB}{\I_{\mbox{\scriptsize\sf BB}}}
\newcommand{\IB}{\I_{\mbox{\scriptsize\sf Bose}}}
\newcommand{\Label}{\label}
\newcommand\conjb[1]{#1^{\mathsf c_{b}} }
\newcommand\conjpio[1]{{  #1^{{{\mathsf c}}}}}
\newcommand\conjB[1]{{  #1^{\mathsf c_{\scalebox{0.55}{\mbox{$\pi$}}}}}}
\begin{document}
%

\title{Specialness and  the Bose representation}

\author{S.G. Barwick, Wen-Ai Jackson and Peter Wild
\\ School of Mathematics, University of Adelaide, Adelaide 5005, Australia
}
\date{\today}
\maketitle

Keywords: Bruck-Bose representation, Bose representation, Baer subplanes, conics, subconics

AMS code: 51E20


%
%

\begin{abstract} This article looks at subconics of order $q$  of $\PG(2,q^2)$ and characterizes them in the Bruck-Bose representation in $\PG(4,q)$.  
In common with other objects in the Bruck-Bose representation, the
 characterisation  uses the transversals of the regular line spread $\S$ associated with  the Bruck-Bose representation.
  By working in the Bose representation  of $\PG(2,q^2)$  in $\PG(5,q)$, we give a geometric explanation as to why 
  the transversals of the regular spread $\S$ are intrinsic to the characterisation of varieties of $\PG(2,q^2)$. 
\end{abstract}

\section{Introduction}\Label{sec:intro}

The focus of this article is conics contained in a Baer subplane of $\PG(2,q^2)$, and the corresponding structure in the Bose representation in $\PG(5,q)$ and in the Bruck-Bose representation in $\PG(4,q)$. We define an $\Fqq$-conic in $\PG(2,q^2)$ to be a non-degenerate conic of $\PG(2,q^2)$.  We define an $\Fq$-conic of $\PG(2,q^2)$ to be a non-degenerate conic in a Baer subplane of $\PG(2,q^2)$. That is, an $\Fq$-conic is projectively equivalent to a set of points in $\PG(2,q)$ that satisfy a non-degenerate homogeneous quadratic equation over $\Fq$.  For the remainder of this article, $\bar \C$ will denote an $\Fq$-conic  in a Baer subplane $\bar \B$ of $\PG(2,q^2)$. Further, we always denote the unique $\Fqq$-conic containing $\bar \C$ by $\bar \Cplus$.

The Bruck-Bose representation of $\PG(2,q^2)$ in $\PG(4,q)$ employs a regular line spread $\S$ in a hyperplane $\si$.  The
interaction  of certain varieties of $\PG(4,q)$ with the transversal lines $g,g^q$  of $\S$  is intrinsic to their characterisation as varieties of $\PG(2,q^2)$. 
The following known characterisations use the transversal lines $g,g^q$, and we refer to these varieties as \emph{$g$-special.} 

\begin{result}\Label{g-spec-know}
\begin{enumerate} 
\item {\rm \cite{CasseQuinn2002}} A conic  $\N_2$ in $\PG(4,q)$ is called $g$-special if the quadratic extension of $\N_2$  to $\PG(4,q^2)$ meets $g$ in one point. The $g$-special conics of $\PG(4,q)$ correspond precisely to  
the  Baer sublines of $\PG(2,q^2)$ disjoint from $\li$.
\item {\rm \cite{CasseQuinn2002}} 
  A ruled cubic surface $\V^3_2$ in $\PG(4,q)$  is called $g$-special if the quadratic extension of $\V^3_2$  to $\PG(4,q^2)$  contains $g$ and $g^q$. The $g$-special ruled cubic surfaces in $\PG(4,q)$ correspond precisely to the 
 Baer subplanes of $\PG(2,q^2)$ tangent to $\li$. 
 \item {\rm \cite{metsch}}
 An orthogonal cone $\U$  in $\PG(4,q)$   is called $g$-special if the quadratic extension of $\U$  to $\PG(4,q^2)$  contains $g$ and $g^q$. The $g$-special orthogonal cones  in $\PG(4,q)$ correspond precisely to the 
classical unitals of $\PG(2,q^2)$.
\item {\rm 
 \cite{BJW}}
 A 3 or 4-dimensional  normal rational curve $\N$ in $\PG(4,q)$  is called $g$-special  if 
  the quadratic extension of $\N$  to $\PG(4,q^2)$ meets $g$ in one or two points respectively. 
  The $g$-special 3 and 4-dimensional  normal rational curves 
   in $\PG(4,q)$ correspond precisely to the 
 $\Fq$-conics in a tangent Baer subplane  of $\PG(2,q^2)$.
  \end{enumerate}
  \end{result}

In this article, we investigate the notion of $g$-special varieties in $\PG(4,q)$  by working in the Bose representation of $\PG(2,q^2)$ in $\PG(5,q)$. This perspective allows us to give a geometric explanation as to why varieties of $\PG(2,q^2)$ give rise to $g$-special varieties of $\PG(4,q)$. 

The article is set out as follows. Section 2 discusses the Bruck-Bose and the Bose representations of $\PG(2,q^2)$ and their relationship to each other,  introducing the notation that we use. Section~\ref{sec:scroll} introduces  the notion of a generalised scroll. In Section~\ref{sec:Baer}, we look at Baer sublines, Baer subplanes, conics and $\Fq$-conics of $\PG(2,q^2)$. We determine their representation in the $\PG(5,q)$ Bose setting, and determine the  extension of these objects to $\PG(5,q^2)$. 

Our main focus for the remainder of the article  is using geometric arguments to characterise the representation of $\Fq$-conics of $\PG(2,q^2)$ in the $\PG(4,q)$ Bruck-Bose setting. 
In Theorem~\ref{conic-subplane-A}, we determine the representation of $\Fq$-conics of $\PG(2,q^2)$ in the $\PG(5,q)$ Bose representation. In Theorem~\ref{conic-quartic}, we use the Bose representation to  give a  short coordinate-free proof that $\Fq$-conics  of $\PG(2,q^2)$  correspond  to a quartic curve in the  $\PG(4,q)$ Bruck-Bose setting; there are five different cases which are described in Corollary~\ref{cor:gspecial}.

In Section~\ref{sec-2special},  we refine the definition of a $g$-special normal rational curve to define a 2-special normal rational curve. We use this to give a short, unified characterisation of all $\Fq$-conics in $\PG(2,q^2)$, namely  the 2-special normal rational curves of $\PG(4,q)$ correspond precisely to the $\Fq$-conics of $\PG(2,q^2)$.

\section{Background}

%
%
%

\subsection{Quadratic extension of varieties}

A variety in $\PG(n,q)$ has a natural extension to a variety in $\PG(n,q^2)$ and in $\PG(n,q^4)$,  we describe the notation we use. 
If $\K$ is
 variety of $\PG(n,q)$, 
 then $\K$ is the set of points of $\PG(n,q)$ satisfying a set of $k$ homogeneous $\Fq$-equations   $\F=\{f_i(x_0,\ldots,x_n)=0, \ i=1,\ldots,k\}$. 
We define the  {\em variety-extension} of $\K$ to $\PG(n,q^2)$, denoted  $\Kstar$, to be the variety consisting of the set of points of $\PG(n,q^2)$ that  satisfy the same set $\F$  of homogeneous equations as 
$\K$.  Similarly, we can define the {\em variety-extension} of $\K$ to $\PG(n,q^4)$, denoted  $\Kstarstar$.
So if $\Pi_r$ is an $r$-dimensional subspace of $\PG(n,q)$, then $\Pirstar$ is the natural extension to an $r$-dimensional subspace of $\PG(n,q^2)$, and $\Pirstarstar$ is the extension to $\PG(n,q^4)$. In this article, we use the $\star,\blackstar$ notation for $n=3,4,5$. We do not use it for varieties in $\PG(2,q^2)$.

    \subsection{The Bruck-Bose representation of $\PG(2,q^2)$}\Label{sec:intro-BB}
 
The Bruck-Bose representation of $\PG(2,q^2)$ in $\PG(4,q)$ was introduced independently by Andr\'e \cite{andr54} and Bruck and Bose \cite{bruc64,bruc66}. 
Let $\si$ be a hyperplane of $\PG(4,q)$ and let $\S$ be a regular spread
of $\si$. Consider the  incidence
structure $\IBB$ with 
 {\sl points}  the points of $\PG(4,q)\backslash\si$ and the lines of $\S$;  {\sl lines}  the planes of $\PG(4,q)\backslash\si$ that contain
  an element of $\S$, and a line at infinity $\li$ whose points correspond to the lines of $\S$; and {\sl incidence} induced by incidence in
  $\PG(4,q)$. Then $\IBB\cong\PG(2,q^2)$.
We call $\IBB$ the {\em Bruck-Bose representation} of $\PG(2,q^2)$ in $\PG(4,q)$.  If $\bar \K$ is a set of points in $\PG(2,q^2)$, then we denote the corresponding set of points  in $\IBB$ by $[\K]$.  
Associated with a regular spread $\S$ in $\PG(3,q)$ are a unique pair of {\em transversal lines} in the quadratic extension $\PG(3,q^2)$. These transversal lines are disjoint from $\PG(3,q)$, and are called conjugate with respect to the map $X=(x_0,x_1,x_2,x_3)\mapsto X^q=(x_0^q,x_1^q,x_2^q,x_3^q)$. We denote these transversal lines by $g,g^q$. The spread $\S$ is the set of $q^2+1$ lines $XX^q\cap\PG(3,q)$ for $X\in g$, in particular, the points of $\li$ are in one-to-one correspondence with the points of $g$.  For more details on this representation, see \cite{UnitalBook}.

\subsection{The Bose representation of $\PG(2,q^2)$}\Label{sec:intro-Bose}

\subsubsection{Geometric construction}

Bose \cite{Bose} introduced the following representation of $\PG(2,q^2)$ as a line spread  in $\PG(5,q)$.  Embed $\PG(5,q)$ in $\PG(5,q^2)$ and
let $\Pit$ be a plane in $\PG(5,q^2)$ which is disjoint from $\PG(5,q)$.
There is a unique involutory automorphism that fixes $\PG(5,q)$ pointwise, namely $X=(x_0,\ldots,x_5)\mapsto X^q=(x_0^q,\ldots,x_5^q)$, we call $X,X^q$ \emph{conjugate points} of $\PG(5,q^2)$.  The plane $\Pit$ has a (disjoint) \emph{conjugate plane} $\Pit^q$. 

We define an incidence structure  $\IB$. The  {\em points} of $\IB$ are the $q^4+q^2+1$  lines of $\PG(5,q)$ of form $XX^q\cap\PG(5,q)$ for points 
 $X\in\Pit$. These lines form a regular 1-spread of $\PG(5,q)$ denoted by ${\mathbb S}$, and we call $\Pit$, $\Pit^q$ 
 the {\em transversal planes} of ${\mathbb S}$. Conversely, any regular  1-spread of $\PG(5,q)$ has a unique set of transversal planes in $\PG(5,q^2)$, and can be constructed in this way. The  {\em lines} of $\IB$   are the 3-spaces of $\PG(5,q)$ that meet ${\mathbb S}$ in $q^2+1$ lines. A straightforward counting argument shows that 
 these 3-spaces form a dual spread $\mathbb H$ (that is, each 4-space of $\PG(5,q)$ contains a unique 3-space in $\mathbb H$).  {\em Incidence} in $\IB$ is inclusion.
  Then $\IB\cong\Pit\cong\PG(2,q^2)$, and $\IB$  is called the {\em Bose representation} of $\PG(2,q^2)$ in $\PG(5,q)$. 
  
\subsubsection{Notation}
  We use the following notation. A point $\bar X$ in $\PG(2,q^2)$ corresponds to a unique point of the transversal plane $\Pit$ denoted $X$, and the Bose representation of  $\bar X$  is the line of $\mathbb S$ denoted by $\Bo{X}=XX^q\cap\PG(5,q)$. Note that $$\Bo{X}\star=XX^q \quad \textup{and} \quad \Bo{X}\star\cap\Pit=X.$$ 
 More generally, if $\bar\K$ is a set of points of $\PG(2,q^2)$, then $\K$ denotes the corresponding set of points of  the transversal plane $\Pit$ and $\lbose \K\rbose$ denotes the corresponding set of lines in the Bose representation in $\PG(5,q)$. 

\subsubsection{Baer sublines and subplanes in the Bose representation}
 The next result of Bose \cite{Bose}  describes the representation of Baer sublines and subplanes of $\PG(2,q^2)$ in $\PG(5,q)$. See \cite{HT} for details on the Segre variety $\S_{1;2}$. 
  
\begin{result}\Label{Bose-Baer}
\begin{enumerate}
\item Let $\bar \bs$ be a Baer subline of $\PG(2,q^2)$, then in $\PG(5,q)$, $\Bo{\bs}$ is a regulus.
\item   Let $\bar \B$ be a Baer subplane of $\PG(2,q^2)$, then in $\PG(5,q)$, the lines of $\lbose\B\rbose$ form the maximal systems of lines of a  Segre variety $\S_{1;2}$. 
\end{enumerate}
\end{result}

 \subsubsection{Coordinates in the $\PG(5,q)$ Bose representation}\Label{sec:coord}

We present coordinates for the Bose representation of $\PG(2,q^2)$ in $\PG(5,q)$.  Let $\tau$ be a primitive element of $\Fq$ with primitive polynomial $$x^2-t_1x-t_0$$ for $t_0,t_1\in\Fq$, so $\tau+\tau^q=t_1$ and $\tau\tau^q=-t_0$. 
 Let $\bar P=(x,y,z)\in\PG(2,q^2)$, so we can write $$\bar P=\big(x_0+x_1\tau,\ y_0+y_1\tau,\ z_0+z_1\tau\big)$$ for unique $x_i,y_i,z_i\in\Fq$. 
 
 \begin{lemma}
Let $\bar P\in \PG(2,q^2)$ have coordinates $\bar P=(x,y,z)=\big(x_0+x_1\tau,\ y_0+y_1\tau,\ z_0+z_1\tau\big)$ as above. Then the Bose representation of $\bar P$ in $\PG(5,q)$ is the line  $\Bo{P}=P_0P_1$ with 
\begin{eqnarray*}
 P_0&=&(x_0,x_1,y_0,y_1,z_0,z_1)\\
 P_1&=&\Big(x_1t_0,\ x_0+x_1t_1,\ y_1t_0,\ y_0+y_1t_1,\ z_1t_0,\ z_0+z_1t_1\Big).
 \end{eqnarray*}
\end{lemma}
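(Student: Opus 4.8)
The plan is to unwind the two constructions in the excerpt — the abstract Bose construction via the transversal plane $\Pit$, and the explicit coordinatisation of $\Pit$ — and simply match them up. The statement is a concrete description of the line $\Bo{P} = PP^q\cap\PG(5,q)$, so the natural approach is: first write down explicit coordinates for the point $P\in\Pit$ corresponding to $\bar P$, then compute its conjugate $P^q\in\Pit^q$, then find two $\Fq$-rational points on the line $PP^q$, and verify they coincide (up to the obvious $\mathrm{PGL}$-freedom) with the claimed $P_0,P_1$.

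\textbf{Step 1: coordinates for $\Pit$.} I would first recall (or fix) the standard coordinatisation used in Section~\ref{sec:coord}: the transversal plane $\Pit$ of the regular spread $\mathbb S$ is taken so that the point of $\Pit$ corresponding to $\bar P=(x,y,z)\in\PG(2,q^2)$ is $P=(x,\tau x, y,\tau y, z,\tau z)$ — i.e. $\PG(5,q^2)$ is coordinatised as three copies of the line $\PG(1,q^2)$, with $\Pit$ the graph of multiplication by $\tau$ on each copy, and $\Pit^q$ the graph of multiplication by $\tau^q$. (If the paper has not literally stated this, I would state it here as the definition of the coordinates, since the lemma is precisely the assertion that this choice reproduces the Bruck–Bose-compatible coordinates.) Then $P^q = (x^q,\tau^q x^q, y^q,\tau^q y^q, z^q,\tau^q z^q)$.

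\textbf{Step 2: substitute $\bar P = (x_0+x_1\tau,\dots)$ and find two $\Fq$-points on $PP^q$.} Writing $x=x_0+x_1\tau$, so $x^q = x_0+x_1\tau^q$, each coordinate pair of $P$ is $(x_0+x_1\tau,\ \tau x_0 + \tau^2 x_1)$ and of $P^q$ is $(x_0+x_1\tau^q,\ \tau^q x_0 + (\tau^q)^2 x_1)$. A general point of the line $PP^q$ is $\lambda P + \mu P^q$. I would compute the particular $\Fq$-rational combinations: $P + P^q$ (using $\tau+\tau^q=t_1$ and, via $\tau^2 = t_1\tau+t_0$, that $\tau^2+(\tau^q)^2 = t_1^2 + 2t_0$) gives a point with first two coordinates $(2x_0+t_1 x_1,\ t_1 x_0 + (t_1^2+2t_0)x_1)$, and $\tau^q P + \tau P^q$ (using $\tau\tau^q = -t_0$, $\tau^2\tau^q+\tau(\tau^q)^2 = \tau\tau^q(\tau+\tau^q) = -t_0 t_1$) gives $\Fq$-coordinates. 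A cleaner choice: take $\frac{\tau^q P - \tau P^q}{\tau^q-\tau}$ and $\frac{P^q - P}{\tau^q-\tau}$ (both fixed by the conjugation map $X\mapsto X^q$ since numerator and denominator each get negated), which on the first coordinate-pair yield exactly $(x_0,\ \tau x_0 - \text{(correction)})$ — carrying this through, the first is $(x_0, x_1 t_0, y_0, y_1 t_0, z_0, z_1 t_0)$-type and the second is the $(x_1, x_0+x_1t_1,\dots)$-type. Matching the span of these two $\Fq$-points with the span of the claimed $P_0,P_1$ finishes the proof. (Concretely: $\frac{P^q-P}{\tau^q-\tau}$ has coordinate-pairs $(x_1,\ \tau x_1 + x_0 \cdot 0 + \dots)$ — I would do the one-line reduction $\frac{\tau^2-(\tau^q)^2}{\tau-\tau^q}=\tau+\tau^q=t_1$ to get the second entry $x_0 + t_1 x_1$, matching $P_1$; and a symmetric computation using $\frac{\tau\tau^q(\tau-\tau^q)}{\tau-\tau^q}=\tau\tau^q=-t_0$ for $P_0$ — note $P_0$ as written has second entry $x_1$, so I would actually present $P_0$ as $\frac{\tau^q P - \tau P^q}{\tau^q-\tau}$ scaled appropriately, double-checking signs against $\tau\tau^q=-t_0$.)

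\textbf{Main obstacle.} There is no serious obstacle — this is a routine verification — but the one genuine point of care is \emph{bookkeeping of the coordinatisation convention}: the lemma is only true relative to a specific identification of $\PG(5,q^2)$'s coordinates with $\Pit,\Pit^q$, and I must make sure the convention I fix for $\Pit$ is the one that makes $P_0,P_1$ come out exactly as stated (in particular getting the $t_0$ versus $-t_0$ sign right, since $\tau\tau^q=-t_0$, and making sure the projective scaling is absorbed correctly so that $P_0,P_1$ appear without spurious factors of $(\tau-\tau^q)$ or $t_1^2+2t_0$). I would therefore organise the proof as: (i) state the convention for $\Pit$; (ii) write $P,P^q$; (iii) exhibit $P_0,P_1$ as explicit conjugation-invariant $\Fq$-combinations of $P$ and $P^q$ and check they are $\Fq$-rational and independent; (iv) observe $\langle P_0,P_1\rangle = PP^q$, hence $\Bo{P} = PP^q\cap\PG(5,q) = \langle P_0,P_1\rangle$, which is the claim.
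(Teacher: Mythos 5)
Your overall strategy (fix $\Pit$, write $P$ and $P^q$, extract two conjugation-fixed points on $PP^q$) is workable in principle, but the specific convention you fix in Step 1 is not the one under which the lemma is true, and your own computation in Step 2 shows it: with $P=(x,\tau x,y,\tau y,z,\tau z)$ you correctly land on the two $\Fq$-rational points $(x_0,\,x_1t_0,\,y_0,\,y_1t_0,\,z_0,\,z_1t_0)$ and $(x_1,\,x_0+x_1t_1,\,\dots)$, but these do \emph{not} span the same line as the claimed $P_0=(x_0,x_1,\dots)$ and $P_1=(x_1t_0,\,x_0+x_1t_1,\,\dots)$. (Try $\bar P=(1,\tau,0)$: your points are $(1,0,0,t_0,0,0)$ and $(0,1,1,t_1,0,0)$, whose span does not contain $P_0=(1,0,0,1,0,0)$ unless $t_0=1$.) The two lines differ by the coordinate change $\mathrm{diag}(1,t_0,1,t_0,1,t_0)$, so this is not a sign to be double-checked but a genuinely different transversal plane: the one compatible with the stated $P_0,P_1$ is $\Pit=\langle(\tau^q,-1,0,0,0,0),(0,0,\tau^q,-1,0,0),(0,0,0,0,\tau^q,-1)\rangle$, i.e.\ coordinate pairs $(\tau^q u,-u)$ rather than $(u,\tau u)$. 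Moreover the paper only identifies $\Pit$ in the \emph{next} lemma, and that identification is proved \emph{using} the present one (via $P=\tau^qP_0-P_1$), so starting from an explicit $\Pit$ either requires an independent derivation of it or risks circularity; declaring your $\Pit$ to be the definition, as you suggest, would make the lemma false as stated.

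The paper's proof avoids $\Pit$ and conjugation entirely and is worth internalising: the Bose line $\Bo{P}$ is the set of points of $\PG(5,q)$ obtained from the homogeneous representatives $\rho\,(x,y,z)$, $\rho\in\Fqq\setminus\{0\}$, by expanding each $\Fqq$-coordinate over the basis $\{1,\tau\}$ and interleaving; as $\rho$ varies one gets $(q^2-1)/(q-1)=q+1$ points forming a line, and the choices $\rho=1$ and $\rho=\tau$ give exactly $P_0$ and $P_1$ (the latter via $\tau x=\tau(x_0+x_1\tau)=x_1t_0+(x_0+x_1t_1)\tau$, using $\tau^2=t_1\tau+t_0$). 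This field-reduction argument is both shorter and is what pins down the coordinatisation that the subsequent lemma on $\Pit$ then lives inside.
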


\begin{proof} The point $\bar P$ has homogeneous coordinates $(x,y,z)\equiv \rho (x,y,z)$ for any $\rho\in\Fqqq\setminus \{0\}$. 
As $\rho$ varies,  we generate a related point of $\PG(5,q)$, giving us the $(q^2-1)/(q-1)=q+1$ points of the line $\Bo{P}$ of the Bose spread. We determine two of these points to determine the line. 
Firstly, related to the coordinate representation of $\bar P=\big(x_0+x_1\tau,\ y_0+y_1\tau,\ z_0+z_1\tau\big)$ with $\rho=1$ is the point $P_0$ of $\PG(5,q)$ with coordinates given above. 
 Secondly,  consider the representation of $\bar P$ with $\rho=\tau$, that is, $\bar P=\tau(x,y,z)=(\tau x,\tau y,\tau z)$. The first coordinate of this expands as 
$\tau x=\tau(x_0+x_1\tau)=x_1t_0+\tau(x_0+x_1t_1)$. The second and third coordinates expand similarly. Corresponding to this coordinatate representation of $\bar P$ is the point 
 $P_1\in\PG(5,q)$  given above. So the Bose representation of $\bar P$ in $\PG(5,q)$ is the line $\Bo{P}=P_0P_1.$
\end{proof}

Next we determine in $\PG(5,q^2)$ the coordinates of the two unique transversal planes $\Pit$, $\Pit^q$ of the Bose regular 1-spread $\mathbb S$.

\begin{lemma}\label{coordPit}
The transversal planes of the Bose regular 1-spread $\mathbb S$ are
$$\Pit=\langle A_1,A_2,A_3\rangle \quad \mbox{and}\quad \Pit^q=\langle A_1^q,A_2^q,A_3^q\rangle$$
where $$A_1=(\tau^q,-1,\,  0,0,\, 0,0),\ \quad
A_2=(0,0,\, \tau^q,-1,\,  0,0),\  \quad
A_3=(0,0,\, 0,0,\, \tau^q,-1).$$
\end{lemma}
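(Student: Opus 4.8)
The statement to prove is Lemma \ref{coordPit}: that the transversal planes of the Bose spread $\mathbb S$ are $\Pit=\langle A_1,A_2,A_3\rangle$ and $\Pit^q=\langle A_1^q,A_2^q,A_3^q\rangle$ with the $A_i$ as displayed. The plan is to verify this directly from the coordinate description of the lines of $\mathbb S$ obtained in the preceding lemma, using the intrinsic characterisation of $\Pit$: namely, $\Pit$ is the unique plane of $\PG(5,q^2)$, disjoint from $\PG(5,q)$, such that for every point $\bar P\in\PG(2,q^2)$ the line $\Bo{P}\star$ meets $\Pit$ (in the point $P$, with $\Bo{P}\star=PP^q$). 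Equivalently, every spread line of $\mathbb S$, extended to $\PG(5,q^2)$, meets the candidate plane; since a regular $1$-spread has exactly one pair of transversal planes, verifying this for the candidate $\langle A_1,A_2,A_3\rangle$ together with disjointness from $\PG(5,q)$ identifies it as $\Pit$ (up to swapping $\Pit\leftrightarrow\Pit^q$, which is a choice of labelling fixed by, say, declaring $\tau$ rather than $\tau^q$).

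First I would record that $A_1,A_2,A_3$ are linearly independent over $\Fqq$ (immediate from the block structure) and that the plane they span is disjoint from $\PG(5,q)$: a point $\lambda_1A_1+\lambda_2A_2+\lambda_3A_3$ has coordinates of the shape $(\lambda_1\tau^q,-\lambda_1,\lambda_2\tau^q,-\lambda_2,\lambda_3\tau^q,-\lambda_3)$, and for this to be $\Fq$-rational (up to scalar) one needs $\lambda_i\tau^q/\lambda_i=\tau^q\in\Fq$, which is false since $\tau\notin\Fq$; hence no such point lies in $\PG(5,q)$, so $\langle A_1,A_2,A_3\rangle\cap\PG(5,q)=\emptyset$, and similarly for the conjugate plane, and the two planes are disjoint from each other (if they met, the meet point would be conjugate-invariant, hence in $\PG(5,q)$). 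Next I would take a general point $\bar P=(x_0+x_1\tau,y_0+y_1\tau,z_0+z_1\tau)$ and its spread line $\Bo{P}=P_0P_1$ from the previous lemma, extend to $\PG(5,q^2)$, and exhibit explicitly the point in which $\Bo{P}\star=P_0P_1$ meets $\langle A_1,A_2,A_3\rangle$. Concretely I expect the combination $P_0-\tau^{q}$-times... more precisely, one checks that $P_1-\tau^q P_0$ (or the symmetric $\tau P_0 - P_1$, up to scalar) has coordinates proportional to $(x_0+x_1\tau^q)(\tau^q,-1,0,0,0,0)+\cdots$ — using $\tau\tau^q=-t_0$ and $\tau+\tau^q=t_1$ to simplify each coordinate block, e.g. the first pair becomes $x_1t_0+x_0\tau^q+x_1t_1\tau^q - \tau^q x_0 = x_1(t_0+t_1\tau^q)=x_1\tau\tau^q\cdot(\tau^q/\tau)\dots$ — landing on a nonzero $\Fqq$-combination of $A_1,A_2,A_3$. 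That single computation, carried out on one coordinate block and quoted as identical for the other two by symmetry, shows $\Bo{P}\star\cap\Pit\neq\emptyset$ for all $\bar P$.

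Since the lines $\Bo{P}\star$ for $\bar P\in\PG(2,q^2)$ are exactly the extended spread lines of $\mathbb S$, and we have shown each of them meets the fixed plane $\langle A_1,A_2,A_3\rangle$ which is disjoint from $\PG(5,q)$, the plane is a transversal plane of $\mathbb S$; applying the conjugation automorphism $X\mapsto X^q$ (which fixes $\PG(5,q)$ and permutes $\mathbb S$) shows $\langle A_1^q,A_2^q,A_3^q\rangle$ is the conjugate transversal plane, and uniqueness of the transversal pair of a regular spline gives $\{\Pit,\Pit^q\}=\{\langle A_1,A_2,A_3\rangle,\langle A_1^q,A_2^q,A_3^q\rangle\}$. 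The main obstacle is purely bookkeeping: getting the right scalar multiple of $P_0,P_1$ so that the difference collapses onto $\mathrm{span}(A_1,A_2,A_3)$ cleanly, and confirming it is genuinely nonzero (i.e. $\Bo{P}\star$ is not contained in, and does not miss, $\Pit$ — it meets it in exactly one point, namely $P$). This amounts to one honest but short use of the relations $\tau+\tau^q=t_1$, $\tau\tau^q=-t_0$, and I would present it for a single coordinate block only, remarking that the other two blocks are identical. One should also double-check consistency with the earlier remark $\Bo{X}\star\cap\Pit=X$: the meet point we produce must be (a scalar multiple of) the coordinate vector one would call $P$, which serves as a useful sanity check on the sign/scalar choices rather than as an extra step.
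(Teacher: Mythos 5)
Your approach is essentially the paper's own: it likewise verifies that the plane $\langle A_1,A_2,A_3\rangle$ meets the extension of every spread line via the identity $\tau^q P_0 - P_1 = xA_1+yA_2+zA_3$, which is exactly the combination you name (up to sign), together with disjointness from $\PG(5,q)$. Your intermediate coordinate manipulation is garbled as written, but the honest computation on the first block, $\tau^q x_0 - x_1t_0 = \tau^q(x_0+x_1\tau)=\tau^q x$ and $\tau^q x_1-(x_0+x_1t_1)=-(x_0+x_1\tau)=-x$, does collapse onto $xA_1$ as you predict, so the plan goes through.
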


\begin{proof} In $\PG(5,q^2)$, let 
$A_1$, 
$A_2$, $A_3$ be the points given in the statement, and
 let $\alpha$ be the plane $\alpha=\langle A_1,A_2,A_3\rangle$.
Observe that the plane $\alpha$ lies in $\PG(5,q^2)\setminus\PG(5,q)$. 
We show that the extension of every  line  of the Bose spread $\mathbb S$ meets the plane $\alpha$ in a point, and hence $\alpha=\Pit$. 
First observe   the triangle of reference of $\PG(2,q^2)$ which is $\bar X=(1,0,0)$, $\bar Y=(0,1,0)$, $\bar Z=(0,0,1)$. Correspondingly in $\PG(5,q^2)$, we have $$\Bo{X}\star\cap \alpha=A_1,\quad \Bo{Y}\star\cap\alpha=A_2,\quad \Bo{Z}\star\cap\alpha=A_3.$$
Now consider the point $\bar P=(x,y,z)$ of $\PG(2,q^2)$. Then $\Bo{P}\star$ meets $\alpha$ in the point $P$ where $$P\ =\ \tau^q P_0-P_1\ = \ xA_1+yA_2+zA_3.$$ Hence each line of the Bose spread  $\mathbb S$ meets $\alpha$ in a point, and so $\alpha$ is a transversal plane of $\mathbb S$. That is, the two transversal planes of the regular spread $\mathbb S$ are $\Pit=\langle A_1,A_2,A_3\rangle$, and $\Pit^q=\langle A_1^q,A_2^q,A_3^q\rangle.$
\end{proof}

Now consider the Baer subplane $\bar\pi_0=\PG(2,q)$ of $\PG(2,q^2)$. 
Let $\bar{\mathsf c}$ be the unique involution acting on points of $\PG(2,q^2)$ that fixes $\bar\pi_0$ pointwise.
In $\PG(2,q^2)$, we have  $$\bar P= (x,y,z) \ \ \longmapsto\ \  {\bar {P}}^{\bar{\mathsf c}}=(x^q,y^q,z^q).$$ In the transversal plane $\Pit\subset\PG(5,q^2)$, correspondingly we have the map $\mathsf c$ which acts on points of 
$\Pit$ and fixes the Baer subplane  $\pi_0$ pointwise, where $$P=xA_0+yA_1+zA_2\ \ \longmapsto \ \   {\conjpio{P}}=x^qA_0+y^qA_1+z^qA_2\ \in\ \Pit.$$

In summary, corresponding to the point $\bar P=(x,y,z)\in\PG(2,q^2)$, we have four related points on the transversal planes  in $\PG(5,q^2)$ 
$$
\begin{array}{rcrcrcrclclcl}
 P&=&xA_1&+&yA_2&+&zA_3& \in& \Pit\\
  {\conjpio{P}}&=&x^qA_1&+&y^qA_2&+&z^qA_3& \in&\Pit\\
P^q&=&x^qA_1^q&+&y^qA_2^q&+&z^qA_3^q& \in& \Pit^q\\
(\conjpio{P})^q&=&xA_1^q&+&yA_2^q&+&zA_3^q & \in & \Pit^q.
\end{array}
$$

In the next section we demonstrate how the Bruck-Bose setting is embedded in the Bose setting. Note that writing $\PG(5,q)=\{(x_0,\ldots,x_5)\,|\,x_i\in\Fq\}$, and intersecting the coordinates  for $\IB$ described above with the 4-space $\Pi_g$ of equation $x_5=0$ gives the coordinates for the Bruck-Bose representation of $\PG(2,q^2)$ in $\Pi_g=\PG(4,q)$ in the format described in \cite{UnitalBook}.

    \subsection{The Bruck-Bose representation inside the Bose representation}\Label{sec:intro-BBinB}
 
 We can construct the Bruck-Bose representation $\IBB$ of $\PG(2,q^2)$ as a subset of the Bose representation $\IB$ by essentially intersecting a 4-space with the Bose representation  in $\PG(5,q)$. 
 Let $\Pi_g$ be a 4-space of $\PG(5,q)$, so $\Pi_g$ contains a unique 3-space of $\IB$, which we denote by $\si$. The extension of $\Pi_g$ to $\PG(5,q^2)$ meets the transversal plane $\Pit$ of the Bose spread $\mathbb S$ in a line denoted $g$. Further, $\langle g,g^q\rangle\cap\PG(5,q)=\si$. The intersection of   the 4-space $\Pi_g$ which an element of the Bose representation $\IB$ gives the corresponding element of the Bruck-Bose representation. That is, if $\bar \K$ is a subset of $\PG(2,q^2)$, then $\Bo{K}\cap\Pi_g=[\K]$, and we write $\IBB=\IB\cap\Pi_g$, see Figure~\ref{fig1}.  
So we have the following correspondences:
 $$
\begin{array}{ccccccccccc}
\PG(2,q^2)&\cong&\Pit&\cong&\IB&\cong&\IBB\\
\bar P &\longleftrightarrow&
P
&\longleftrightarrow&
\Bo{P}=PP^q\cap\PG(5,q)
&\longleftrightarrow&
[P]=\Bo{P}\cap\Pi_g.
\end{array}$$
The lines $g,g^q$ are the transversal lines of the Bruck-Bose regular 1-spread $\S$ in $\si$. So $\li$ in $\PG(2,q^2)$ corresponds to the line $g$ in  the transversal plane $\Pit$, and to the spread $\S$ in the 3-space  $\si=\langle g,g^q\rangle\cap\PG(5,q)$, where $\S=\mathbb S\cap \si$. 
For a point $\bar A\in\li$, we have a corresponding point  $A\in g$ and corresponding spread line  $\Bo{A}=[A]$. 
  \begin{figure}[h]\caption{Bruck-Bose inside Bose}\label{fig1}
 \centering
 \input{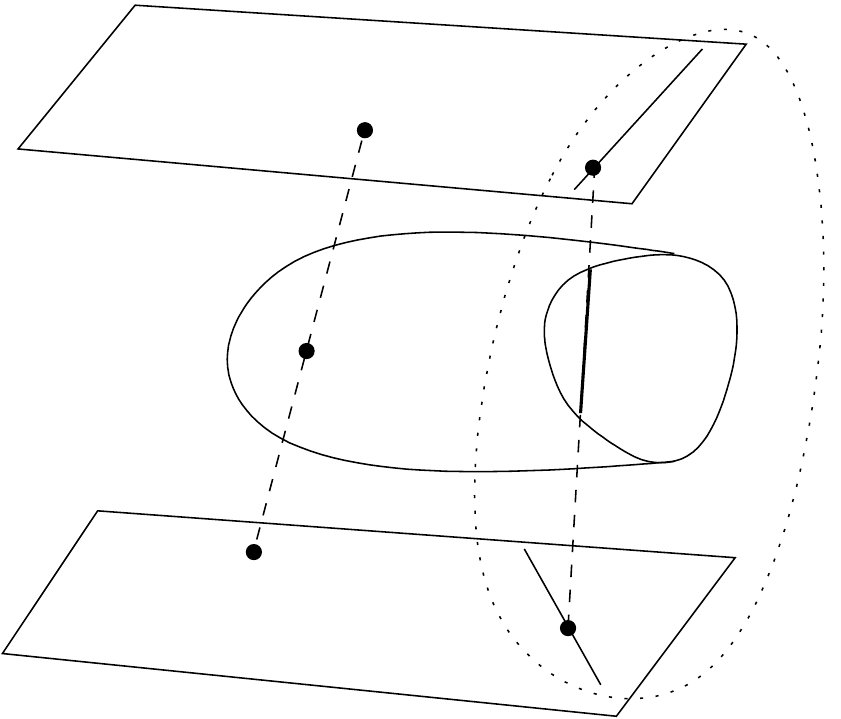_t}
 \end{figure}

\begin{remark}\Label{remark:BBexact}
{\rm 
The Bruck-Bose correspondences described in the literature are not always exact-at-infinity. That is, we do not always include lines contained in $\si$ in our description. However, when considering  the Bruck-Bose representation as a subset of the Bose representation, that is, $\IBB=\IB\cap \Pi_g$, then we get a representation which is  exact-at-infinity. 
 For example,  we usually say that a Baer subplane $\bar \B$ of $\PG(2,q^2)$ secant to $\li$ corresponds to a plane $[\B]$ of $\PG(4,q)$ that does not contain a spread line (see \cite[Theorem 3.13]{UnitalBook}). To give a representation which is  exact on $\si$, we need to describe $[\B]$ as a plane $\alpha$ that does not contain a spread line, {\em together with} the $q+1$ spread lines that meets $\alpha$. We distinguish between these conventions by using   the phrase \emph{``the exact-at-infinity} Bruck-Bose representation in $\PG(4,q)$''. 
 }\end{remark}

 %
%
%
%

\subsection{Notation summary}
 
 \begin{itemize}
 \item $\bar P,\ \bar \C,\ \bar\B,\ldots$ are used to represent objects in $\PG(2,q^2)$. 
  \item For a variety $\K$ in $\PG(n,q)$, $n>2$, we denote the extension to $\PG(n,q^2)$ by $\Kstar$, and the extension to $\PG(n,q^4)$ by $\K\blackstar$. 
  \item In $\PG(n,q^h)$, $n>2$, we let $X^q$ denote the map $X=(x_0,\ldots,x_n)\longmapsto X^q=(x^q_0,\ldots,x^q_n)$.
 \item $\bar \C$ denotes an $\Fq$-conic in $\PG(2,q^2)$, and $\bar \Cplus$ denotes the unique $\Fqq$-conic  in $\PG(2,q^2)$ containing $\bar \C$.
 \item In $\IB$
 \begin{itemize}
 \item[$\cdot$]   $\mathbb S$ is a regular 1-spread in $\PG(5,q)$.
  \item[$\cdot$]   $\mathbb S$ has two transversal planes in $\PG(5,q^2)$, denoted $\Pit$, $\Pit^q$. 
 \item[$\cdot$] 
 A point $\bar P$ in $\PG(2,q^2)$ corresponds to a point $P$ in the transversal plane $\Pit$, and to a line $\Bo{P}$ of $\mathbb S$, where $\Bo{P}=PP^q\cap\PG(5,q)$.

 \item[$\cdot$] For a Baer subline $\bs$ contained in a line $\ell_b$ of $\Pit$, the unique involution acting on points of $\ell_b$ that fixes $\bs$ pointwise is denoted $\mathsf c_b$, and called \emph{conjugation with respect to $\bs$.} 
   \item[$\cdot$] For a Baer subplane $\B$ contained in $\Pit$, the unique involution acting on points of $\Pit$ that fixes $\B$ pointwise is denoted $\mathsf c_\pi$, and called \emph{conjugation with respect to $\B$}. Note that for a Baer subline $\bs$ contained in a line $\ell_b$ of $\Pit$, the maps $\mathsf c_\pi$ and $\mathsf c_b$ agree on $\ell_b$ iff $b$ is a line of $\B$. 
   \end{itemize}
  \item In $\IBB$
 \begin{itemize}
\item[$\cdot$]   $\S$ is a regular 1-spread in the 3-space at infinity $\si\cong\PG(3,q)$. 
\item[$\cdot$]  $\S$ has transversal lines denoted 
 $g,g^{q}$ in $\sistar\cong\PG(3,q^2)$.
\item[$\cdot$]   For a point $\bar X\in\PG(2,q^2)\setminus \li$, we denote corresponding point of $\PG(4,q)\setminus\si$ by  $[X]$.
\item[$\cdot$]   If $\bar X\in\li$, then we denote the corresponding point on $g$ by $X$ and the corresponding spread line by $[X]$, so $[X]=XX^q \cap\si$ and $X=[X]\star\cap g$. 
  \end{itemize}
 \item  in $\IBB=\IB\cap\Pi_g$, where $\Pi_g$ is a 4-space of $\PG(5,q)$. 
\begin{itemize}
\item[$\cdot$]   $\Pigstar$ meets  the transversal plane $\Pit$ in the line $g$  which is the transversal line of the regular spread $\S=\{PP^q\cap\Pi_g\st P\in g\}$.
\item[$\cdot$]  For a point $\bar X\in\PG(2,q^2)$, we have $[X]=\Bo{X}\cap\Pi_g$.
\end{itemize}
\end{itemize}

 \section{Generalising scrolls}\Label{sec:scroll}

We begin by  defining the notion of a scroll that rules two normal rational curves according to a projectivity, see for example \cite{vande}. For details on normal rational curves, see \cite{HT}. In $\PG(n,q)$, let $\N_r$ be an $r$-dimensional normal rational curve contained in an $r$-space $\Sigma_r$, and let $\N_s$ be an $s$-dimensional normal rational curve contained in an $s$-space $\Sigma_s$, where $\Sigma_r\cap\Sigma_s=\emptyset$. Denote the parameters of $\N_r$, $\N_s$ by $\theta$, $\phi$ respectively. That is,  there is a homography in $\PGL(n+1,q)$ acting on the points of $\PG(n,q)$ that maps $\N_r$ to the set $\{R_\theta=(1,\theta,\ldots,\theta^r,\ 0,\ldots,0)\,|\, \theta\in\Fq\cup\{\infty\}\} $ and  maps $\N_s$ to the set $\{S_\phi=(0,\ldots,0,\ 1,\phi,\ldots,\phi^s,)\,|\, \phi\in\Fq\cup\{\infty\}\} $.
Let $\sigma\in\PGL(2,q)$ be a projectivity (homography) that maps $(1,\theta)$ to $(1,\phi)$. The 
projectivity $\sigma$ determines a bijection map from the points of $\N_r$ to the points of $\N_s$. The set of $q+1$ lines of $\PG(n,q)$ that join each point of $\N_r$ with the corresponding point of $\N_s$ is called a \emph{scroll}, denoted $\scroll(\N_r,\N_s,\sigma)$. We say that  $\scroll(\N_r,\N_s,\sigma)$ is \emph{a scroll that rules $\N_r$ and $\N_s$ according to the projectivity $\sigma\in\PGL(2,q)$. }

For example,  a regulus $\R$ of $\PG(3,q)$ is a scroll as follows.
Let $\ell,m$ be two lines of opposite regulus  of $\R$, then the lines of $\R$ are generated by a projectivity of $\PGL(2,q)$
ruling $\ell$ and $m$, that is, the lines of $\R$ form a scroll. 
Another example is  the Bruck-Bose representation in $\PG(4,q)$ of a tangent Baer subplane of $\PG(2,q^2)$ as in Result~\ref{g-spec-know}(1). This  ruled cubic surface $\V^3_2$  rules a line and a conic according to a projectivity of $\PGL(2,q)$, and so is a scroll, see~\cite{UnitalBook}.

We generalise the notion of a scroll ruling two normal rational curves   to scrolls which rule two varieties contained in disjoint planes of $\PG(5,q)$. 
For example, a Segre variety $\S_{1;2}$ is a set of  planes $\alpha_1,\ldots,\alpha_{q+1}$, which are ruled by  lines $\ell_1,\ldots,\ell_{q^2+q+1}$, see \cite{HT}. The points of the plane $\alpha_i$ can be thought of as $\{(x_i,y_i,z_i)\,|\,x_i,y_i,z_i\in\Fq,\ \textup{not all zero}\}$.  The lines $\ell_1,\ldots,\ell_{q^2+q+1}$ rule the two planes $\alpha_1,\alpha_2$ according to a homography $\sigma\in\PGL(3,q)$. That it, $\sigma$   maps the  coordinates of points in $\alpha_1$ (written in the form $(x_1,y_1,z_1)$) to  the coordinates of points in $\alpha_2$  (written  in the form  $(x_2,y_2,z_2)$), and if $\sigma(x_1,y_1,z_1)=(x_2,y_2,z_2)$, then one of the ruling lines $\ell_j$ contains the point  of $\alpha_1$ corresponding to $(x_1,y_1,z_1)$  and the point of $\alpha_2$ corresponding to  $(x_2,y_2,z_2)$.
We call this set of lines a scroll $\scroll(\alpha_1,\alpha_2,\sigma)$, as it rules two planes (which are varieties with two non-homogeneous coordinates) according to a homography in $\PGL(3,q)$.

Finally, we generalise this notion of scrolls to rulings of 
 two Baer subspaces. 
In $\PG(3,q^2)$, let $\bs_1$ be a Baer subline of a line $\ell_1$, and let  $\bs_2$ be a Baer subline of a line $\ell_2$, with $\ell_1\cap\ell_2=\emptyset$. The coordinates of the points of $\bs_1$ can be written as $(1,\theta)$, $\theta\in\Fq\cup\{\infty\}$, 
and the coordinates of the points of $\bs_2$ can be written as $(1,\phi)$, $\phi\in\Fq\cup\{\infty\}$. Let $\sigma\in\PGL(2,q)$ be a projectivity, such that $\sigma\colon(1,\theta)\mapsto(1,\phi)$. 
Then the set of lines that joins each point of $\bs_1$ to the corresponding (under $\sigma$) point of $\bs_2$ 
 form 
a scroll denoted $\scroll(\bs_1,\bs_2,\sigma)$.

Similarly, in $\PG(5,q^2)$ let $\B_1$ be a Baer subplane of a plane $\alpha_1$, and $\B_2$ a Baer subplane of the plane $\alpha_2$ with $\alpha_1\cap\alpha_2=\emptyset$. 
 Then there is a homography $\sigma\in\PGL(3,q)$ that maps the  coordinates of points in $\B_1$ (written in the form $(x_1,y_1,z_1)$) to  the coordinates of points in $\B_2$  (written  in the form  $(x_2,y_2,z_2)$). 
The lines of $\PG(5,q^2)$ that join points of $\B_1$ to the corresponding (via $\sigma$) points of $\B_2$ 
form a scroll 
 $\scroll(\B_1,\B_2,\sigma)$.

{\em Scroll-extensions:\ } We can naturally extend a scroll of $\PG(5,q)$ to a scroll of $\PG(5,q^2)$ as follows.  Note that  if $\U$ is a variety in a plane $\Pi_\U$ of $\PG(5,q)$, then the extension to $\PG(5,q^2)$, denoted $\U\star$, is the set of points in the extended plane $\PiUstar$ that satisfy the same set of equations that define $\U$.    
Let $\scroll(\U,\W,\sigma)$ be a scroll of $\PG(5,q)$, then we define the {\em scroll-extension} to $\PG(5,q^2)$ to be the scroll $\scroll(\U\star,\W\star,\sigma\star)$, where $\sigma$ acts over $\Fq$, and $\sigma\star$ is the natural extension   acting over $\Fqq$.  

We now consider a scroll of $\PG(5,q)$  which rules two conics according to a projectivity of $\PGL(2,q)$, and  show that the pointset of this scroll forms a variety. Further, we determine the order and dimension of the variety using techniques and concepts as given in Semple and Roth \cite[I.4]{SR}.

\begin{lemma}\Label{conic-v42} In $\PG(5,q)$, let $\Pi$, $\Pi'$ be two disjoint planes, and let $\C$, $\C'$ be non-degenerate conics in $\Pi,\Pi'$ respectively. Then the pointset of any scroll $\scroll(\C,\C',\phi)$ is the pointset of  a variety of dimension 2 and order 4.
\end{lemma}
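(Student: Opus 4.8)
The plan is to exhibit the scroll explicitly in coordinates, compute the ideal of its pointset, and read off dimension and degree; but the cleaner route is a hybrid: use coordinates to get a parametrisation, use that to see the variety is two-dimensional and irreducible, and then compute the degree by intersecting with a general hyperplane (or by the projection/count-of-intersection method of Semple--Roth \cite[I.4]{SR}). First I would normalise: by a homography of $\PGL(6,q)$ fixing $\Pi$ and $\Pi'$ and moving nothing else, take $\Pi=\langle e_0,e_1,e_2\rangle$, $\Pi'=\langle e_3,e_4,e_5\rangle$, and put the two conics in standard form $\C=\{(1,t,t^2,0,0,0)\mid t\in\Fq\cup\{\infty\}\}$ and $\C'=\{(0,0,0,1,s,s^2)\mid s\in\Fq\cup\{\infty\}\}$. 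The projectivity $\phi\in\PGL(2,q)$ identifying the parameter lines is $s=\phi(t)$, a Möbius map; absorbing $\phi$ into the parametrisation of $\C'$ (replacing $s$ by $\phi(t)$), the scroll is the union over $t$ of the lines
$$
L_t=\big\langle (1,t,t^2,0,0,0),\ (0,0,0,1,\phi(t),\phi(t)^2)\big\rangle.
$$
So the pointset is the image of the map $\Psi:(\Fq\cup\{\infty\})\times\PG(1,q)\to\PG(5,q)$, $(t,[\lambda:\mu])\mapsto \lambda(1,t,t^2,0,0,0)+\mu(0,0,0,1,\phi(t),\phi(t)^2)$.

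Next I would argue the pointset $\V$ is a variety of dimension $2$. Irreducibility and two-dimensionality follow because $\Psi$ is a morphism from an irreducible surface $\PG(1,q)\times\PG(1,q)$ (after clearing denominators in $\phi(t)$, $\Psi$ is given by bihomogeneous forms) that is generically injective: a general point of $\V$ lies on exactly one ruling line $L_t$, since two distinct ruling lines are disjoint (they meet $\Pi$ in distinct points of $\C$). To get the degree, I would intersect $\V$ with a general hyperplane $H:\sum a_ix_i=0$. Substituting $\Psi$ and clearing the denominator from $\phi$, the condition becomes $\lambda f(t)+\mu g(t)=0$ where $f,g$ are polynomials in $t$ of degree $2$ (from the conic parametrisations composed with $\phi$, the $\phi(t)^2$ term has degree $2$ in $t$ after homogenising correctly — here one must be slightly careful about the degree bookkeeping when $\phi$ is not the identity, but a Möbius substitution does not raise the degree beyond $2$). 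For fixed generic $[\lambda:\mu]$ this has at most $2$ solutions in $t$; dually, the section $\V\cap H$ is a curve, and to count its degree I instead intersect $\V$ with a general \emph{codimension-2} subspace $\Sigma_3$ (a general $3$-space, i.e. two general hyperplanes). Then I must solve $\lambda f(t)+\mu g(t)=0$ and $\lambda f'(t)+\mu g'(t)=0$ simultaneously; eliminating $[\lambda:\mu]$ gives $f(t)g'(t)-g(t)f'(t)=0$, a polynomial of degree $\le 4$ in $t$, and for each root the pair $[\lambda:\mu]$ is determined. A genericity/transversality check shows there are exactly $4$ such points, counted without multiplicity, so $\deg\V=4$.

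The main obstacle I expect is the degree computation done honestly: one needs to verify that $f(t)g'(t)-g(t)f'(t)$ genuinely has degree $4$ (not lower) for the generic choice of the codimension-2 subspace, and that all four roots are simple and give distinct points of $\V$ — i.e. that the general $3$-space meets $\V$ transversally in $4$ reduced points. This is where the Semple--Roth machinery \cite[I.4]{SR} is convenient: rather than a direct Bézout-style count one can project $\V$ from a general $3$-space to $\PG(1,q)$ and argue the generic fibre has size $4$, or compute the degree of $\V$ as the degree of a general curve section using the fact that $\V$ is a rational scroll (its general plane section is a rational quartic curve — the image of $\PG(1,q)$ under forms of degree $2+2$). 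I would also remark that the five special cases alluded to after the lemma correspond to the possible mutual positions of $\C$, $\C'$ and the relevant subspaces (whether $\phi$ is the identity, fixes one or both of the parameters corresponding to points meeting certain transversals, etc.), but for \emph{this} lemma only the generic count matters, so I would keep the argument uniform and defer those refinements. Finally I would note that, as the pointset is defined as the union of the $L_t$, it is automatically Zariski-closed (a finite union of lines over $\Fq$), so ``variety'' is justified, and the dimension/degree statement is exactly what the section-counting gives.
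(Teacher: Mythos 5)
Your argument is correct and follows the same overall strategy as the paper's proof --- normalise coordinates, parametrise the scroll rationally, and compute the order as the number of points in a generic $3$-space section --- but the count is organised quite differently. The paper parametrises the scroll from a \emph{plane}: it sets up a one-to-one algebraic correspondence (in the Semple--Roth sense) between the points of a plane $M_2\subset\PG(3,q)$ and the scroll via the six cubic forms $y_0^3,y_0^2y_1,\dots,y_1^2y_2$; a $3$-space section then pulls back to two plane cubics sharing the base points of this system, and B\'ezout ($9$ intersections) minus the excess at the base locus ($4+1=5$) yields the order $4$. You instead parametrise from $\PG(1,q)\times\PG(1,q)$ by forms of bidegree $(2,1)$ and eliminate the line parameter $[\lambda:\mu]$, so the section is governed by the determinant $fg'-gf'$, a binary quartic in the conic parameter; this avoids the base-point bookkeeping entirely, at the price of the genericity check on $fg'-gf'$ and the (correctly flagged) care needed to see that composing with the M\"obius map $\phi$ keeps $f,g$ quadratic --- which becomes automatic once $\phi$ is absorbed as a linear substitution in the homogeneous parameter $(r,s)$, exactly as the paper does by reducing to the case where $\phi$ is the identity from the outset. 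One small caution: your closing remark that the pointset is ``automatically a variety because it is a finite union of lines'' undersells the content of the lemma --- over $\Fq$ every pointset is Zariski-closed, and what is actually needed (and what both proofs supply) is that the pointset coincides with the $\Fq$-points of a specific surface of dimension $2$ and order $4$, since the subsequent results depend on extending \emph{that} variety to $\PG(5,q^2)$.
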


\begin{proof} 
Denote the set of points on the scroll $\scroll(\C,\C',\phi)$ by $\SC$. 
 Without loss of generality, we can coordinatise so that $\phi$ is essentially the identity. That is, let  $\C=\{P_{r,s}=(r^2,rs,s^2,0,0,0)\st r,s\in\Fq,\mbox{\ not\ both\ } 0\}$,  $\C'=\{P'_{r,s}=(0,0,0,r^2,rs,s^2)\st r,s\in\Fq,\mbox{\ not\ both\ } 0\}$, and 
$\SC$ consists of the points on the $q+1$ lines joining the point $P_{r,s}$ with the point $P'_{r,s}$ for $r,s\in\Fq,\mbox{\ not\ both\ } 0$.

We first  show that the points of $\SC$ form a variety $\V(\SC)$ of  dimension 2 by showing that the points of $\SC$ are in \mbox{one-to-one} algebraic correspondence with the points of a plane in 
$\PG(3,q)$.  Following the notation of \cite[page 14]{SR}, consider the 
linear equation
$M(y_0,y_1,y_2,y_3)=y_3$, and the 
six cubics  with homogeneous equations
$F_0(y_0,y_1,y_2,y_3)=y_0^3$, $F_1(y_0,y_1,y_2,y_3)=y_0^2y_1$, $F_2(y_0,y_1,y_2,y_3)=y_0y_1^2$, $F_3(y_0,y_1,y_2,y_3)=y_0^2y_2$, $F_4(y_0,y_1,y_2,y_3)=y_0y_1y_2$, $F_5(y_0,y_1,y_2,y_3)=y_1^2y_2.$
Now $M=0$ is the equation of a plane $M_2$ of $\PG(3,q)$, and so   is an irreducible primal of dimension 2 and order 1. 
Consider the point $\big(F_0(y_0,y_1,y_2,0),\ldots, F_5(y_0,y_1,y_2,0)\big)$ in $\PG(5,q)$, this is a point of $\SC$ since it has form 
$$(y_0^3,\ y_0^2y_1,\ y_0y_1^2,\ y_0^2y_2,\ y_0y_1y_2,\ y_1^2y_2)= y_0(y_0^2,\ y_0y_1,\ y_1^2,0,0,0)\ +\ y_2( 0,0,0,y_0^2,\ y_0y_1,\ y_1^2).
$$
This is a one-to-one algebraic correspondence between the points on the plane $M_2$  and the points of $\SC$ (so that the generic point of $\SC$ arises from only one point of $M_2$). 
 Hence the pointset of   $\SC$  coincides with the pointset of a variety  denoted $\V(\SC)$  which has  dimension 2. Note that the map $F\colon M_2\mapsto \SC$ defined by $F(y_0,y_1,y_2)=\big(F_0(y_0,y_1,y_2,0),\ldots,\ F_5(y_0,y_1,y_2,0)\big)$ has kernel $\{(0,1,0,0),(0,0,1,0)\}$.
 
 Next we show that the variety $\V(\SC)$ has order $4$, that is, we show that  a generic 3-space of $\PG(5,q)$ meets $\SC$ in 4 points. Consider the 3-space of $\PG(5,q)$ which is the intersection of  two hyperplanes $\Pi,\Pi'$ with equations $g=a_0x_0+\ldots+a_5x_5=0$ and $g'=b_0x_0+\ldots+b_5x_5=0$. Now $\Pi\cap\SC$ algebraically corresponds to the points in the plane  $M_2$   satisfying the cubic $\K_1$ of equation
$a_0y_0^3+a_1y_0^2y_1+a_2y_0y_1^2+ a_3y_2y_0^2+a_4y_2y_0y_1+a_5y_2y_1^2=0$. Note that $\K_1$ contains $\ker F$.
Straightforward calculations show that each line of $M_2$ through the point $(0,0,1,0)$ meets $\K_1$ twice at $(0,0,1,0)$. That is,   
 $(0,0,1,0)$ is a double point of $\K_1$. 
 Similarly $\Pi'\cap\SC$   algebraically corresponds to the points of a cubic $\K_2$ in the plane $M_2$. 
 The cubic  $\K_2$ contains $\ker F$, and $(0,0,1,0)$ is a double point of $\K_2$. 
 Two cubics of $\PG(2,q)$ meet generically in 9 points. As $(0,0,1,0)$ is a double point, it counts 4 times in this intersection. So in total, the points in $\ker F$ count 5 times in this intersection. Hence there are 4 points in $\K_1\cap\K_2$ which correspond to 4 points lying on $\SC$  in $\PG(5,q)$. Hence  
   $\SC$ meets  a generic 3-space in 4 points as required. 
\end{proof}

     \section{Bose varieties extended to $\PG(5,q^2)$}\Label{sec:Baer}

     We look at  the Bose representation of conics, Baer sublines, Baer subplanes and $\Fq$-conics of $\PG(2,q^2)$. We then investigate the extension of each variety from $\PG(5,q)$   to $\PG(5,q^2)$. 
     
 We begin with a lemma on lines that meet both transversal planes of the Bose spread $\mathbb S$.

\begin{lemma}\Label{lem:hypcong} Let $\ell,m$ be two  lines of $\PG(5,q^2)$ that meet both  the transversal planes $\Pit$  and $\Pit^q$. Then $\ell,m$  are either equal,  disjoint, or meet in  a point of $\Pit$  or $\Pit^q$. 
\end{lemma}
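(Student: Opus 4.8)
Write $\Pit$ and $\Pit^q$ for the two transversal planes; they are disjoint planes of $\PG(5,q^2)$ spanning the whole space, so $\PG(5,q^2) = \langle \Pit, \Pit^q\rangle$ and every point $X$ of $\PG(5,q^2)$ off these two planes has a well-defined "projection" decomposition $X \in \langle X_1, X_2\rangle$ with $X_1 = \langle X, \Pit^q\rangle \cap \Pit$ and $X_2 = \langle X, \Pit\rangle \cap \Pit^q$ uniquely determined. A line $\ell$ meeting both $\Pit$ and $\Pit^q$, say in points $L_1 \in \Pit$ and $L_2 \in \Pit^q$, is then precisely the line $L_1 L_2$ (if $L_1 \ne L_2$; if $\ell$ meets the two planes in the same point then $\ell$ lies... no, it can't, since $\Pit \cap \Pit^q = \emptyset$, so always $L_1 \ne L_2$). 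So each such line is the join of one point of $\Pit$ to one point of $\Pit^q$.

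Let me think about the plan.

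**Plan.** The plan is to parametrize the lines meeting both transversal planes by pairs $(L_1, L_2) \in \Pit \times \Pit^q$ and then do a dimension count on the intersection $\ell \cap m$. First I would set $\ell = L_1 L_2$ and $m = M_1 M_2$ with $L_1, M_1 \in \Pit$ and $L_2, M_2 \in \Pit^q$. If $\{L_1, L_2\} = \{M_1, M_2\}$ then $\ell = m$; so assume the pairs are distinct. Now suppose $\ell$ and $m$ meet in a point $Z$. The key case split is on whether $Z \in \Pit \cup \Pit^q$ or not.

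**Main argument.** If $Z \in \Pit$: since $Z \in \ell = L_1 L_2$ and $L_2 \notin \Pit$ while $L_1 \in \Pit$, and $\Pit$ is a subspace, the only point of $\ell$ in $\Pit$ is $L_1$ (a line through a point of $\Pit$ and a point off $\Pit$ meets $\Pit$ exactly once); hence $Z = L_1$, and similarly $Z = M_1$, so $L_1 = M_1 \in \Pit$ — this is the "meet in a point of $\Pit$" conclusion. Symmetrically if $Z \in \Pit^q$ we get $L_2 = M_2 \in \Pit^q$. The remaining case is $Z \notin \Pit \cup \Pit^q$, and here I would derive a contradiction unless $\ell = m$: using the unique decomposition mentioned above, $Z$ lies on $\ell = L_1 L_2$ forces the $\Pit$-component of $Z$ to be $L_1$ and its $\Pit^q$-component to be $L_2$; likewise $Z \in m$ forces these components to be $M_1$ and $M_2$. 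By uniqueness of the decomposition $L_1 = M_1$ and $L_2 = M_2$, contradicting distinctness of the pairs. Hence in all cases $\ell, m$ are equal, disjoint, or meet in a single point of $\Pit$ or of $\Pit^q$.

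**Expected obstacle.** There is no deep obstacle here — this is essentially a linear-algebra triviality about two complementary subspaces and the induced direct-sum decomposition of the ambient space. The only point needing a little care is making the "unique decomposition" argument clean: one should phrase it via the two projections $\PG(5,q^2) \dashrightarrow \Pit$ (along $\Pit^q$) and $\PG(5,q^2) \dashrightarrow \Pit^q$ (along $\Pit$), note they are defined away from $\Pit^q$ and $\Pit$ respectively, and observe that every point $Z$ off both planes is recovered as $Z \in \langle \mathrm{pr}_{\Pit}(Z), \mathrm{pr}_{\Pit^q}(Z)\rangle$ uniquely. Alternatively, and perhaps more transparently for the reader, one can argue via coordinates (as in Lemma~\ref{coordPit}, write $\Pit = \langle A_1, A_2, A_3\rangle$, $\Pit^q = \langle A_1^q, A_2^q, A_3^q\rangle$, and the six $A_i, A_i^q$ form a basis of $\PG(5,q^2)$), reducing the whole statement to the obvious fact that two secants $L_1L_2$, $M_1M_2$ of a spread of $\PG(5,q^2)$ by the $q^4+q^2+1$... no — more simply, two lines each joining $\Pit$ to $\Pit^q$ can share a point only as described, by expressing each point in the basis $\{A_i, A_i^q\}$. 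I would go with whichever phrasing keeps the proof to a few lines.
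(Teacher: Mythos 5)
Your proposal is correct and follows essentially the same route as the paper: both arguments reduce the lemma to the fact that a point of $\PG(5,q^2)$ lying on neither transversal plane lies on a unique line meeting both planes. The only difference is in how uniqueness is established --- you use the direct-sum decomposition induced by the two complementary planes, while the paper argues synthetically that two concurrent such lines would span a plane meeting each of $\Pit$ and $\Pit^q$ in a line, contradicting their disjointness.
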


\begin{proof} Call a point  of $\PG(5,q^2)$ that lies in one of the transversal planes $\Pit$ or $\Pit^q$ a $T$-point. Call a line  of $\PG(5,q^2)$   that meets both   transversal planes a $T$-line. 
We prove the equivalent statement that each point of $\PG(5,q^2)$ not in $\Pit$ or $\Pit^q$ lies on a unique $T$-line. 
Let $P$ be a point    of $\PG(5,q^2)$ that is not a $T$-point. The 3-space $\langle P,\Pit\rangle$ meets $\Pit^q$ in a unique point $R$, and the 3-space $\langle P,\Pit^q\rangle$ meets $\Pit$ in a unique point $S$. Hence $P$ lies 
 on at least one $T$-line, namely the line $RS$. 
Suppose $P$  lies on two $T$-lines $\ell,m$. Then the four points $\ell\cap\Pit$, $m\cap\Pit$, $\ell\cap\Pit^q$, $m\cap\Pit^q$ are distinct. Hence $\langle \ell,m\rangle$ is a plane that meets $\Pit$ in a line and  meets $\Pit^q$ in a line. This contradicts $\Pit,\Pit^q$ being disjoint. Hence $P$ lies on at most one $T$-line, and we conclude that $P$ lies on exactly one $T$-line. 
\end{proof}

\subsection{Conics}

 \begin{theorem}\Label{Fqq-conic-C}
Let $\bar\O$ be a non-degenerate $\Fqq$-conic in $\PG(2,q^2)$, and consider the Bose representation of $\PG(2,q^2)$ in $\PG(5,q)$.
\begin{enumerate}
\item  In $\PG(5,q)$, the points of $\lbose \O\rbose$ form a variety $\V(\lbose \O\rbose)=\Q_1\cap\Q_2$ where $\Q_1,\Q_2$ are  two quadrics.
\item  In $\PG(5,q^2)$,  $\V(\lbose \O\rbose)\star=\newQonestar\cap\newQtwostar$, and the points of this variety consist of the points on the lines  
 $\{XY^q\st X, Y\in\O\}$. Hence $\V(\lbose \O\rbose)\star \cap\Pit=\O$.
\end{enumerate}
\end{theorem}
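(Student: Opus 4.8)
The plan is to normalise the conic and then compute in the Bose coordinate frame of Section~\ref{sec:coord}. Applying a homography of $\PG(2,q^2)$ — which induces a collineation of $\PG(5,q)$ preserving the Bose spread $\mathbb S$ and hence the pair of transversal planes — we may assume $\bar\O$ is the conic $xz=y^2$.

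For part~(1): by the coordinatisation of Section~\ref{sec:coord}, the point $(X_0,X_1,Y_0,Y_1,Z_0,Z_1)$ of $\PG(5,q)$ is the point $P_0$ of the spread line $\Bo{P}$ with $\bar P=(X_0+X_1\tau,\,Y_0+Y_1\tau,\,Z_0+Z_1\tau)$, so it lies on $\Bo{\O}$ if and only if $\bar P\in\bar\O$. Substituting into $xz-y^2=0$, using $\tau^2=t_1\tau+t_0$, and setting the $\Fq$-part and the coefficient of $\tau$ equal to $0$, gives two homogeneous quadratic equations
\[
q_1=X_0Z_0+t_0X_1Z_1-Y_0^2-t_0Y_1^2,\qquad q_2=X_0Z_1+X_1Z_0+t_1X_1Z_1-2Y_0Y_1-t_1Y_1^2 ,
\]
which define quadrics $\Q_1,\Q_2$ with $\V(\Bo{\O})=\Q_1\cap\Q_2$.

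For part~(2): since the variety-extension is by definition cut out over $\Fqq$ by the same equations $q_1=q_2=0$, the identity $\V(\Bo{\O})\star=\newQonestar\cap\newQtwostar$ is immediate, and the work lies in describing this point set. Using Lemma~\ref{coordPit}, a point $X=(a,b,c)$ of $\Pit$ has $\PG(5,q^2)$-coordinates $(a\tau^q,-a,b\tau^q,-b,c\tau^q,-c)$ and a point $Y^q$ of $\Pit^q$ (where $Y=(d,e,f)$) has coordinates $(d^q\tau,-d^q,e^q\tau,-e^q,f^q\tau,-f^q)$. By Lemma~\ref{lem:hypcong}, each point $R$ of $\PG(5,q^2)$ off $\Pit\cup\Pit^q$ lies on a unique line $XY^q$ meeting both transversal planes; writing $R=\alpha X+\beta Y^q$ with $\alpha,\beta\neq 0$ and simplifying with $\tau+\tau^q=t_1$, $\tau\tau^q=-t_0$ (the $\alpha\beta$ cross terms cancel, and $\tau^{2q}+t_0=\tau^q(\tau^q-\tau)$, $\tau^2+t_0=\tau(\tau-\tau^q)$) one obtains
\[
q_1(R)=(\tau-\tau^q)\bigl(\beta^2(df-e^2)^q\,\tau-\alpha^2(ac-b^2)\,\tau^q\bigr),\qquad q_2(R)=(\tau-\tau^q)\bigl(\alpha^2(ac-b^2)-\beta^2(df-e^2)^q\bigr).
\]
As $\tau\neq\tau^q$, vanishing of both forms forces (substituting the second relation into the first) $\alpha^2(ac-b^2)=\beta^2(df-e^2)^q=0$, i.e.\ $ac=b^2$ and $df=e^2$, i.e.\ $X\in\O$ and $Y\in\O$; conversely if $X,Y\in\O$ the whole line $XY^q$ lies on $\newQonestar\cap\newQtwostar$. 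It remains to treat the transversal planes: substituting the coordinates of a point $(x,y,z)$ of $\Pit$ gives $q_1=(xz-y^2)\tau^q(\tau^q-\tau)$ and $q_2=(xz-y^2)(\tau-\tau^q)$, so both vanish exactly when $(x,y,z)\in\O$; hence $\newQonestar\cap\newQtwostar\cap\Pit=\O$, which is the last assertion of the theorem, and conjugating (the quadrics have $\Fq$-coefficients) gives $\newQonestar\cap\newQtwostar\cap\Pit^q=\O^q$. Since every line $XY^q$ with $X,Y\in\O$ meets $\Pit$ only in $X$ and $\Pit^q$ only in $Y^q$, combining the three cases shows that the point set of $\newQonestar\cap\newQtwostar$ is exactly $\bigcup_{X,Y\in\O}XY^q$.

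The step I expect to be the main obstacle is the computation of $q_1(R),q_2(R)$ along the transversal line $XY^q$: the sign bookkeeping and the cancellation of the $\alpha\beta$ cross terms are what cause the two quadratic conditions to collapse to the single condition $X,Y\in\O$. Everything else is routine substitution together with Lemmas~\ref{coordPit} and~\ref{lem:hypcong}.
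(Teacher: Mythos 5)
Your proof is correct, and part (1) is essentially identical to the paper's: normalise to $xz=y^2$, split into real and $\tau$-parts, and read off the two quadrics (your $q_1,q_2$ agree, up to sign, with the paper's $f_1,f_2$). For part (2) you take a genuinely different route. The paper works with the pencil $\newQonestar+\lambda\newQtwostar$ over $\Fqq$ and singles out the two degenerate members $f_1+\tau f_2$ and $f_1+\tau^q f_2$: it shows (via vanishing of partial derivatives) that these are cones with vertex $\Pit^q$ and base $\O$, respectively vertex $\Pit$ and base $\O^q$, so their intersection is ruled by the common generators, which by Lemma~\ref{lem:hypcong} are exactly the lines $XY^q$ with $X,Y\in\O$. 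You instead parametrise a general point $R=\alpha X+\beta Y^q$ of the unique transversal line through $R$ (again via Lemma~\ref{lem:hypcong}) and evaluate $q_1,q_2$ directly; your formulas
$q_1(R)=(\tau-\tau^q)\bigl(\beta^2(df-e^2)^q\tau-\alpha^2(ac-b^2)\tau^q\bigr)$ and $q_2(R)=(\tau-\tau^q)\bigl(\alpha^2(ac-b^2)-\beta^2(df-e^2)^q\bigr)$ check out (the $\alpha\beta$ cross terms do cancel using $\tau\tau^q=-t_0$ and $\tau+\tau^q=t_1$), and the deduction $ac=b^2$, $df=e^2$ is right, as is the separate treatment of points on $\Pit$ and $\Pit^q$. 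The paper's pencil argument is more structural and coordinate-light once the two cones are identified (and it explains \emph{why} the ruling appears: the extension over $\Fqq$ makes the original conic equation visible as a degenerate quadric); your computation is more elementary and self-contained but hides that geometric picture. Both are valid, and both ultimately lean on Lemma~\ref{lem:hypcong} to assemble the point set from the transversal lines.
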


\begin{proof}
Without loss of generality we consider the non-degenerate conic $\bar\O$ of equation $y^2-zx=0$. We write $x=x_1+\tau x_2$, $y=y_1+\tau y_2$, $z=z_1+\tau z_2$, where $x_i,y_i,z_i\in\Fq$. Expanding and simplifying yields
$f_1+\tau f_2=0$ where $f_1=f_1(x_1,x_2,y_1,y_2,z_1,z_2)=y_1^2+t_0y_2^2-z_1x_1-t_0z_2x_2$ and $f_2=f_2(x_1,x_2,y_1,y_2,z_1,z_2)=t_1y_2^2+2y_1y_2-z_2x_1-z_1x_2-t_1z_2x_2=0$. The points of $\PG(5,q)$ satisfying $f_i=0$ form a quadric $\Q_i$, $i=1,2$. The set of points on the lines $\Bo{\O}$ is same as the set of points in the intersection of these quadrics. That is, the points of $\Bo{\O}$ coincide with the points of the  variety $\Q_1\cap\Q_2$, and we denote this variety by $\VO=\Q_1\cap\Q_2$.

In $\PG(5,q^2)$, the variety $\VO$ has extension $\VO\star=\newQonestar\cap\newQtwostar$. Moreover, $\VO\star$ is the intersection of any two distinct quadrics in the pencil $\newQonestar+\lambda\newQtwostar$, $\lambda\in\Fqq\cup\{\infty\}$. In particular, $\VO\star=\Q_3\cap\Q_4$ where $\Q_3,\Q_4$ are quadrics of $\PG(5,q^2)$ such that $\Q_3=\newQonestar+\tau^q \newQtwostar$ has equation $f_3=f_1+\tau^q f_2$ and  $\Q_4=\newQonestar+\tau \newQtwostar$ has equation $f_4=f_1+\tau f_2$.

Consider the quadric  $\Q_4=\{X\in\PG(5,q^2)\st f_4(X)=0\}$. 
Recall from Lemma~\ref{coordPit} that $\Pit=\langle A_0,A_1,A_2\rangle$. 
Note that as $f_4(A_2)\neq 0$, the point $A_2$ does not lie on quadric $\Q_4$, and so $\Q_4$ does not contain the transversal plane $\Pit$. Further, the point $P=A_1+\theta A_2+\theta^2 A_3$ satisfies $f_4(P)=0$, so lies on $\Q_4$ for $\theta\in\Fqq\cup\{\infty\}$. Hence $\Q_4\cap\Pit$ is a planar quadric which is not a plane, but contains the non-degenerate conic $\O=\{A_1+\theta A_2+\theta^2 A_3\,|\,\theta\in\Fq\cup\{\infty\}\}$. Thus $\Q_4\cap\Pit$ is the conic $\O$. 
The six partial derivatives of $f_4$ vanish at the points $A_1^q,A_2^q,A_3^q$. Hence the transversal plane $\Pit^q=\langle A_1^q,A_2^q,A_3^q\rangle$  lies in the singular space of $\Q_4$. As $\Pit$ is a plane that contains no line of $\Q_4$, it is disjoint from the singular space, and so the maximum dimension of the singular space of $\Q_4$ is two. Hence $\Q_4$ is a cone with vertex the transversal plane   $\Pit^q$ and base $\O$. 

Similarly the quadric  $\Q_3=\{X\in\PG(5,q^2)\st f_3(X)=0\}$ is a cone with vertex the transversal plane  $\Pit$ and base $\O^q$. 
That is, the points of $\Q_4$ are those on the set of all lines joining a point of $\Pit^q$ and a point of $\O$; and  the points of $\Q_3$ are those on  the set of all lines joining a point of $\Pit$ and a point of $\O^q$. By Lemma~\ref{lem:hypcong}, two  lines that meet both the transversal planes   $\Pit$  and $\Pit^q$ are either equal, disjoint, or meet in a point of $\Pit$  or $\Pit^q$. Hence the pointset of $\VO\star=\Q_3\cap\Q_4$ is ruled by the   lines joining a point of $\O$ and a point of $\O^q$, as stated in the theorem. 
\end{proof}

     \subsection{Baer sublines}
     
      Let $\bar \bs$ be a Baer subline of the line $\bar{\ell_b}$ in $\PG(2,q^2)$, and let $\bar{\mathsf c_b}$ be the unique involution in $\PGammaL(2,q)$ acting on the points of $\bar{\ell_b}$ which fixes $\bar \bs$ pointwise. In $\PG(5,q)$, this corresponds to the Baer subline $\bs$ of the line $\ell_b$ in  the transversal plane $\Pit$, and $\mathsf c_b$ is an involution which acts on the points of $\ell_b$, and fixes $\bs$ pointwise. Note that $\mathsf c_b$ is not a collineation of $\PG(5,q)$. If $X\in\ell_b$, we denote its \emph{conjugate point with respect to $\bs$} by $\conjb{X}\in\ell_b$.  The image of this point under the conjugacy map $X\mapsto X^q$ of $\PG(5,q^2)$ is the point $(\conjb{X})^q$, which  lies in the other transversal plane $\Pit^q$. In particular, we have four points of interest, $X,\conjb{X}\in\Pit$ and $X^q,(\conjb{X})^q\in\Pit^q$ 
 We will be interested in the line $X (\conjb{X})^q$ which we call a {\em scroll-line}. If $X\in b$, then $\conjb{X}=X$ and so the scroll-line becomes $X (\conjb{X})^q=XX^q$ which  meets $\PG(5,q)$ in the line $\Bo{X}$. If $X\in\ell_b\setminus b$, then the scroll-line $X (\conjb{X})^q$ is disjoint from  $\PG(5,q)$.

By Result~\ref{Bose-Baer}, the set of lines $\Bo{\bs}=\{\Bo{X}\st X\in b\}$ form a regulus of $\si\cong\PG(3,q)$, so they 
form one regulus of a  hyperbolic quadric $\Q$ of the 3-space $\Pi_b=\langle \ell_b,\ell_b^q\rangle\cap\PG(5,q)$.  So the pointset of $\Bo{\bs}$ coincides with the pointset of the variety $\Q$, and we denote this variety by $\Vb=\Q$. 
We carefully use this notation as the pointset of $\Bo{\bs}$ is  the pointset of more than one  variety. However, we are interested in the variety that relates to our usual Bruck-Bose representation of Baer sublines. In particular, we want to look at the variety-extension of $\Bo{\bs}$, and so we need to specify \emph{which} variety we mean.
The variety-extension of $\Vb=\Q$  to $\PG(5,q^2)$  is   $\Vb\star=\Q\star$.  The lines of the corresponding extended regulus were determined in 
\cite[Thm 5.3]{Bruck-construct} and consist of the lines we call  scroll-lines. 

\begin{result}\Label{subline-star} Let $\bs$ be a Baer subline of the line $\ell_b$ in  the transversal plane $\Pit$, so $\Bo{\bs}$ is a regulus in the 3-space $\Sigma_b=\langle \ell_b,\ell_b^q\rangle\cap\PG(5,q)$.  
Let
 $\Vb$ be the variety which is the hyperbolic quadric in $\Sigma_b$ whose points coincide with those of $\Bo{\bs}$. Then $
\Vb\star$ is a hyperbolic quadric in $\Sigmabstar$ with one regulus consisting of the  lines
$$ \{  X (\conjb{X})^q\st X\in \ell_b\}.$$ 
\end{result}

We noted in Section~\ref{sec:scroll} that a regulus in $\PG(3,q)$ is ruled by a projectivity, and so $\Bo{\bs}$ is a  scroll. As a projectivity is uniquely determined by the image of three points, the variety-extension and scroll-extension   of  $\Bo{\bs}$ coincide.  Hence the lines $ \{  X (\conjb{X})^q\st X\in \ell_b\}$ of $\Vb\star$ form a scroll.
Note that the set of lines $\{ XX^q\st X\in\ell_b\}$ in $\PG(5,q^2)$ do not  form a scroll  as $\Bo{\bs}$ determines a unique scroll of $\PG(5,q^2)$, namely the regulus $\{X(\conjb{X})^q\st X\in\ell_b\}$.


\subsection{Baer subplanes}
Next we consider a Baer subplane  of $\PG(2,q^2)$.  

\begin{result}\Label{result-HT}
Let $\bar\B$ be a Baer subplane of $\PG(2,q^2)$.
In $\PG(5,q)$,  the lines of $\lbose \B\rbose=\{XX^q\cap\PG(5,q)\st X\in\B\}$ form the maximal systems of lines of a Segre variety $\S_{1;2}$. Hence  the lines of $\lbose \B\rbose$ form   a scroll of $\PG(5,q)$, whose pointset  coincides with the points of a variety $\VB=\Q_1\cap\Q_2\cap\Q_3$ where $\Q_1,\Q_2,\Q_3$ are three quadrics of $\PG(5,q)$. 
\end{result}

\begin{proof}  Result~\ref{Bose-Baer} showed that the lines of $\lbose \B\rbose$  form the maximal systems of lines of a  Segre variety $\S_{1;2}$. Segre varieties are studied in \cite{HT}, and the points of $\S_{1;2}$ form the intersection of three quadrics. It
is straightforward to show that  
given any two planes $\alpha,\beta$ of 
$\S_{1;2}$, the ruling lines correspond to a homography (in the scroll sense)  between the points of $\alpha$ and $\beta$. 
 That is, $\lbose \B\rbose$ is a scroll of $\PG(5,q)$. 
\end{proof}

Corresponding to the Baer subplane $\B$ of  the transversal plane $\Pit$ is a conjugacy map acting on the points of $\Pit$ (namely the unique involutory automorphic collineation in $\PGammaL(3,q^2)$ acting on $\Pit$ that fixes $\B$ pointwise). If $X\in\Pit$, we denote its conjugate point with respect to $\B$ by $\conjB{X}$, noting that $\conjB{X}\in\Pit$.  The image of the point $\conjB{X}$ under the conjugate map $X\mapsto X^q$ of $\PG(5,q^2)$ is the point $(\conjB{X})^q$, which  lies in the other transversal plane $\Pit^q$. We call the line $X (\conjB{X})^q$ a {\em scroll-line}. If $X\in \B$, then the scroll-line becomes $X (\conjB{X})^q=XX^q$ which  meets $\PG(5,q)$ in the line $\Bo{X}$. If $X\in\Pit\setminus \B$, then $X (\conjB{X})^q\cap\PG(5,q)=\emptyset$.  It now follows from Results~\ref{subline-star} and \ref{result-HT} that 
 $\VB\star$ is a variety (namely the intersection of three quadrics) whose points lie on $q^4+q^2+1$ lines which each meet both transversal planes $\Pit$ and $\Pit^q$.

\begin{corollary}\Label{pi0-Bose} Let $\B$ be a Baer subplane of  the transversal plane $\Pit$, and  $\VB=\Q_1\cap\Q_2\cap\Q_3$. Then the points of $
\VB\star=\newQonestar\cap\newQtwostar\cap\newQthreestar$ are the points of a Segre variety  and the pointset of a  scroll. The ruling lines of $
\VB\star$ are $X(\conjB{X})^q$ for $ X\in\Pit$.
\end{corollary}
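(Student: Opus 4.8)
The plan is to combine the three results that have just been assembled: Result~\ref{result-HT} identifies $\VB$ as (the variety cut out by three quadrics whose pointset coincides with) a Segre variety $\S_{1;2}$ and a scroll of $\PG(5,q)$, while Result~\ref{subline-star} describes what happens to the regulus lines of a Baer subline under variety-extension. First I would observe that since $\VB = \Q_1\cap\Q_2\cap\Q_3$, its variety-extension is simply $\VB\star = \newQonestar\cap\newQtwostar\cap\newQthreestar$ by definition of the $\star$ operation; and since the defining equations of a Segre variety $\S_{1;2}$ are preserved under field extension, $\VB\star$ is again (the pointset of) a Segre variety in $\PG(5,q^2)$, hence again carries a scroll structure ruling two of its planes $\alpha_1,\alpha_2$ according to a homography $\sigma\in\PGL(3,q^2)$, exactly as in the discussion of scrolls in Section~\ref{sec:scroll}.

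The substance is then to identify the ruling lines of this extended scroll as the scroll-lines $X(\conjB{X})^q$, $X\in\Pit$. Here I would argue as for Baer sublines: the $q^2+1$ lines of $\Bo{\B}$ through a fixed Baer subline $\bar\bs\subseteq\bar\B$ form a regulus $\Bo{\bs}$, lying in a $3$-space $\Sigma_b$, and by Result~\ref{subline-star} the corresponding regulus of $\Vb\star\subseteq\VB\star$ consists precisely of the scroll-lines $X(\conjb{X})^q$ for $X\in\ell_b$. Since $\bar\bs$ is a line of $\bar\B$, the conjugation $\mathsf c_b$ agrees with $\mathsf c_\pi$ on $\ell_b$ (noted in the Notation summary), so these scroll-lines are exactly $X(\conjB{X})^q$ for $X\in\ell_b$. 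Now let $X\in\Pit$ be arbitrary: pick a Baer subline $\bar\bs$ of $\bar\B$ whose extension line $\bar{\ell_b}$ passes through $\bar X$ (equivalently, choose $\ell_b$ to be any line of $\Pit$ through $X$ meeting $\B$ in a Baer subline — every line of $\Pit$ meeting $\B$ in more than a point does so in a Baer subline). Then $X(\conjB{X})^q$ is a ruling line of $\Vb\star$, hence a line contained in $\VB\star$ and meeting both $\Pit$ and $\Pit^q$. Conversely, a dimension/counting argument shows there are exactly $q^4+q^2+1$ such lines and they are mutually in the position permitted by Lemma~\ref{lem:hypcong} (equal, disjoint, or meeting in a transversal point), so they are precisely the $q^4+q^2+1$ maximal-system lines of the Segre variety $\VB\star$. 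Since a homography is determined by the images of enough points, the scroll-extension of the scroll $\Bo{\B}$ coincides with the variety-extension $\VB\star$, so the ruling lines of $\VB\star$ as a scroll are exactly the $X(\conjB{X})^q$.

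The main obstacle I anticipate is verifying cleanly that every line of $\PG(5,q^2)$ which lies in $\VB\star$ and meets both transversal planes is in fact one of the scroll-lines $X(\conjB{X})^q$ — i.e.\ ruling out spurious extra lines on the Segre variety over the larger field. This is handled by Lemma~\ref{lem:hypcong} together with the fact (from the paragraph preceding the Corollary) that $\VB\star$ is covered by exactly $q^4+q^2+1$ lines each meeting both $\Pit$ and $\Pit^q$; comparing this with the known count of maximal-system lines on $\S_{1;2}$ forces the identification. The remaining points — that scroll-extension and variety-extension agree because a $\PGL(3)$-homography is rigid once its action on a Baer subplane's worth of points is fixed, and that the Segre structure is field-independent — are routine given the scroll formalism of Section~\ref{sec:scroll} and Result~\ref{result-HT}.
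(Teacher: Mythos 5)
Your proposal is correct and follows essentially the same route as the paper: both deduce the Segre/scroll structure of $\VB\star$ from Result~\ref{result-HT}, identify the variety-extension with the scroll-extension via the rigidity of a homography, and obtain the ruling lines $X(\conjB{X})^q$ by applying the Baer subline result (Result~\ref{subline-star}) to the lines of $\B$, using that $\mathsf c_b$ and $\mathsf c_\pi$ agree on lines of $\B$. You simply spell out in more detail the step the paper compresses into ``the ruling lines follow directly from the Baer subline result'' (and note the minor slip: a Baer subline contributes $q+1$, not $q^2+1$, lines of $\Bo{\B}$).
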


\begin{proof}
By Result~\ref{result-HT},  
$\VB=\Q_1\cap\Q_2\cap\Q_3$ and its points coincide with the points of a Segre variety $\S_{1;2}$. The variety-extension to $\PG(5,q^2)$ is $
\VB\star=\newQonestar\cap\newQtwostar\cap\newQthreestar$. As a homography is uniquely determined by an ordered quadrangle, the variety and scroll-extensions of  $\lbose \B\rbose$  to $\PG(5,q^2)$ coincide.
 Hence 
the points of  $\VB\star$ form a scroll, and   the Segre variety $\Sonetwostar$ of $\PG(5,q^2)$. 
  The ruling lines of this Segre variety  follow  directly from the Baer subline result given in Result~\ref{subline-star}. 
\end{proof}

\subsection{Interpreting $g$-special in the Bose representation}

We discuss how the Bose representation gives a geometric explanation of  the notion of $g$-special as defined in Result~\ref{g-spec-know}. That is, why many characterisation of varieties in the Bruck-Bose representation relate to the intersection with the transversal lines $g,g^q$ of the regular 1-spread $\S$ of $\si$.  

In $\PG(2,q^2)$, let  $\bar b$ be a Baer subline of the line $\bar \ell_b$, with $\bar \ell_b\cap\li=\bar B$ and $\bar B\notin \bar b$. 
We work in the Bose representation of $\PG(2,q^2)$ in $\PG(5,q)$. The line $\li$ corresponds to a line, denoted $g$, in the transversal plane $\Gamma$. Let   $\Pi_g$ be a 4-space of $\PG(5,q)$ whose extension contains the 3-space $\sistar=\langle g,g^q\rangle$. That is, $\Pigstar\cap\Pit=g$. We can consider the exact-at-infinity Bruck-Bose representation of $\PG(2,q^2)$ in $\Pi_g$ constructed as $\IBB=\IB\cap\Pi_g$. 
In the transversal plane $\Gamma$, $b$ is Baer subline of the line $\ell_b$, with $\ell_b\cap g=B$ and $B\notin b$. The Bose representation of $b$ is a regulus $\Bo{b}$, which is the pointset of a quadric (a variety)  denoted $\Vb$. The variety $\Vb$ lies in the 3-space $\Sigma_b=\langle b,b^q\rangle\cap\PG(5,q)$. As $B\notin b$, $\Pi_g\cap \Sigma_b$ is a plane, and $\Pi_g$ meets the regulus $\Bo{b}$ in a non-degenerate conic $\V([b])=\Vb\cap\Pi_g$, which is disjoint from $\si$. 
The variety-extension of the conic $\V([b])$ is the non-degenerate conic $\V([b])\star=\Vb\star\cap\Pigstar$. By Result~\ref{subline-star}, $\Vb\star$ meets $\Pit$ in  the line $\ell_b$, and so $\Vb\star\cap g=B$. Hence the extension of $[b]$ is a conic that meets $g$ in the point $B$. That is, the conic $[b]$ is $g$-special in the sense of Result~\ref{g-spec-know}(1).

 
 A similar observation gives a geometric interpretation in the Bose representation of  the other characterisations in Result~\ref{g-spec-know} of varieties in the Bruck-Bose representation in relation to the transversal lines $g,g^q$ of the regular 1-spread $\S$ in $\si$.

\subsection{$\Fq$-conics}

We now look at   an $\Fq$-conic $\C$ contained in the Baer subplane $\B$. We show that  in the Bose representation in $\PG(5,q)$, the lines of $\lbose \C\rbose$ form a   scroll, and the points form a variety $\VC$ which is the intersection of five quadrics. 
The variety-extension of $\VC$ is denoted $\VC\star$.
We show that when viewing $\lbose \C\rbose$ as a scroll, the pointset of the scroll-extension coincides with the pointset of $\VC\star$.
We use this to determine the ruling lines of  $\VC\star$, showing that they are related to $\Cplus$ (the unique $\Fqq$-conic of  the transversal plane $\Pit$ containing $\C$). The ruling lines of $\VC\star$ are illustrated in  Figure~\ref{fig2}.

\begin{theorem}\Label{conic-subplane-A}
Let $\C$ be an $\Fq$-conic in the Baer subplane $\B$ of  the transversal plane $\Pit$, then 
\begin{enumerate}
\item In $\PG(5,q)$, the lines of
 $\lbose \C\rbose$ form a scroll, and the pointset of
 $\lbose \C\rbose$ forms a 
variety $\VC=\Q_1\cap\cdots\cap\Q_5$ where $\Q_1,\ldots,\Q_5$ are quadrics of $\PG(5,q)$. 
\item In $\PG(5,q^2)$, the points of the variety-extension $\VC\star$ form the pointset of  a scroll with  
 ruling lines
$\{X(\conjB{X})^q\st X\in\Cplus\}.$
\end{enumerate}
\end{theorem}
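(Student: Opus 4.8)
The plan is to mimic, at the level of the $\Fq$-conic $\C$ inside the Baer subplane $\B$, the argument that was already carried out for $\Fqq$-conics in Theorem~\ref{Fqq-conic-C} and for Baer subplanes in Corollary~\ref{pi0-Bose}. Concretely: first establish part (1) by combining Result~\ref{result-HT} with Lemma~\ref{conic-v42}. Since $\C\subseteq\B$, the lines of $\lbose\C\rbose$ are a sub-collection of the maximal lines of the Segre variety $\S_{1;2}$ representing $\B$; picking two generating planes $\alpha_1,\alpha_2$ of that Segre variety, the Baer subplane $\B$ meets $\alpha_1,\alpha_2$ in Baer subplanes, and $\C$ meets each in a conic (an $\Fq$-conic in a plane is cut out inside the plane by a non-degenerate quadratic). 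The ruling of $\S_{1;2}$ restricts to a homography-ruling of these two conics, so $\lbose\C\rbose$ is a scroll $\scroll(\C_1,\C_2,\phi)$ in the sense of Section~\ref{sec:scroll}, ruling two conics in disjoint planes $\Pi,\Pi'$ of $\PG(5,q)$. Lemma~\ref{conic-v42} then says its pointset is a variety $\VC$ of dimension $2$ and order $4$; to see that $\VC$ is the intersection of five quadrics one observes it is contained in $\VB=\Q_1\cap\Q_2\cap\Q_3$ and additionally lies on the two quadrics cutting out the conic condition within the $\S_{1;2}$ structure (equivalently, one writes down the five quadratic forms obtained by expanding the defining conic equation of $\C$ using the coordinate description of Section~\ref{sec:coord}, exactly as $f_1,f_2$ were produced in the proof of Theorem~\ref{Fqq-conic-C}).

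For part (2), the key structural fact is that a scroll ruling two conics is determined by a projectivity $\phi\in\PGL(2,q)$, and a projectivity is determined by the images of three points; hence the variety-extension and the scroll-extension of $\lbose\C\rbose$ to $\PG(5,q^2)$ coincide, by the same reasoning used in Corollary~\ref{pi0-Bose} and in the Baer-subline discussion after Result~\ref{subline-star}. Thus $\VC\star$ is the pointset of the scroll $\scroll(\C_1\star,\C_2\star,\phi\star)$ over $\Fqq$. It remains to identify its ruling lines. Here I would argue as follows: each ruling line of $\VC\star$ meets the transversal plane $\Pit$ (since $\C\subseteq\B$ and $\VB\star$ meets $\Pit$ in $\B$ by Corollary~\ref{pi0-Bose}), and in fact $\VC\star\cap\Pit=\C\star_{(\text{inside }\Pit)}$, which is precisely the $\Fqq$-conic $\Cplus$ of $\Pit$ containing $\C$ — this is an instance of the computation in Theorem~\ref{Fqq-conic-C} showing $\V(\lbose\O\rbose)\star\cap\Pit=\O$, now applied with the quadratic that cuts $\Cplus$ out of $\Pit$. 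Similarly $\VC\star\cap\Pit^q=\Cplus^q$. By Corollary~\ref{pi0-Bose}, the ruling lines of $\VB\star$ are the scroll-lines $X(\conjB{X})^q$ for $X\in\Pit$; since $\VC\star\subseteq\VB\star$ and its trace on $\Pit$ is $\Cplus$, exactly those scroll-lines with $X\in\Cplus$ lie on $\VC\star$, and these already account for the full $q^2+1$ lines of the scroll $\scroll(\Cplus,\Cplus^q,\cdot)$ whose pointset has dimension $2$ and order $4$ (Lemma~\ref{conic-v42} applied over $\Fqq$). Hence the ruling lines of $\VC\star$ are exactly $\{X(\conjB{X})^q\st X\in\Cplus\}$, as claimed.

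The main obstacle I anticipate is the bookkeeping needed to justify that the scroll-extension really does rule $\Cplus$ and $\Cplus^q$ (rather than merely containing the scroll-lines over $X\in\C$): one must check that the projectivity $\phi$ defining the scroll $\lbose\C\rbose$, when extended over $\Fqq$, sends the $\Fqq$-extension of $\C_1$ (which is $\Cplus$ sitting in $\Pit$ up to the coordinate identifications) to the $\Fqq$-extension of $\C_2$ compatibly with the conjugation map $\mathsf c_\pi$. This is essentially the same compatibility that makes Result~\ref{subline-star} work for sublines, lifted one dimension; the cleanest route is probably to verify it first on three points of $\C$ (where the scroll-lines are the honest Bose lines $XX^q$, $X\in\C$), conclude equality of projectivities, and then read off the action on all of $\Cplus$. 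A secondary, purely technical point is confirming "intersection of five quadrics": I would either exhibit the five forms explicitly via the Section~\ref{sec:coord} coordinates (as in Theorem~\ref{Fqq-conic-C}), or note that $\VC$ is the intersection of the three Segre quadrics of $\VB$ with the two further quadrics whose vanishing on $\S_{1;2}$ expresses that a point lies on the sub-scroll $\lbose\C\rbose$ rather than on all of $\lbose\B\rbose$.
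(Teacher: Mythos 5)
Your proposal is correct and follows essentially the same route as the paper: part (1) via the decomposition $\VC=\VCplus\cap\VB$ (two quadrics from expanding the equation of the conic as in Theorem~\ref{Fqq-conic-C}, three from the Segre variety of Result~\ref{result-HT}) together with Lemma~\ref{conic-v42}, and part (2) via the coincidence of the variety- and scroll-extensions. The only minor difference is that the paper pins down the ruling lines of $\VC\star$ by intersecting the two ruled line systems of $\VCplus\star$ and $\VB\star$ using Lemma~\ref{lem:hypcong}, whereas you use the trace $\VC\star\cap\Pit=\Cplus$ together with a count of the $q^2+1$ scroll-lines; both arguments are valid.
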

  \begin{figure}[h]\caption{Lines of the scroll $\VC\star$}\label{fig2}
 \centering
 \input{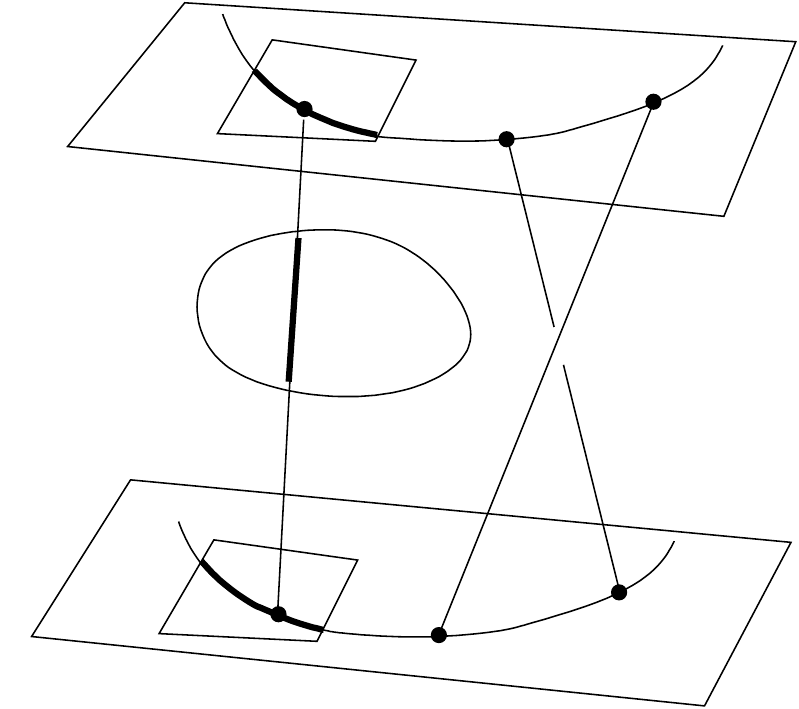_t}
 \end{figure}

\begin{proof}
Let $\C$ be an $\Fq$-conic in the Baer subplane $\B$ of  the transversal plane $\Pit$ with $\Cplus$ denoting the unique $\Fqq$-conic  of $\Pit$ containing $\C$, that is $\C=\Cplus\cap\B$. 
By definition, in $\PG(5,q)$, $\Bo{\C}=\{XX^q\cap\PG(5,q)\st X\in\C\}$, $\Bo{\B}=\{XX^q\cap\PG(5,q)\st X\in\B\}$ and $\Bo{\Cplus}=\{XX^q\cap\PG(5,q)\st X\in\Cplus\}$, so 
\begin{eqnarray}\label{eqn:c1}
\Bo{\C}=\Bo{\Cplus}\cap \Bo{\B}.
\end{eqnarray}
By Result~\ref{result-HT}, the lines of  $\Bo{\B}$ form a scroll. As the lines of  $\Bo{\C}$ are a subset of the lines of $\Bo{\B}$, the  lines of $\Bo{\C}$ form  a scroll, ruled by the same homography as for the scroll $\Bo{\B}$. 

By Theorem~\ref{Fqq-conic-C}, the pointset of $\Bo{\Cplus}$ forms a variety $\VCplus=\Q_1\cap\Q_2$ where $\Q_i$ is a quadric   with homogenous equation $f_i=0$ of degree two over $\Fq$, $i=1,2$.
  By Result~\ref{result-HT},  the pointset of $\Bo{\B}$ forms a variety $\VB=\Q_3\cap\Q_4\cap\Q_5$ where $\Q_i$ is a quadric with homogenous equation $f_i=0$ of degree two over $\Fq$, $i=3,4,5$. So by (\ref{eqn:c1}), the pointset of $\Bo{\C}$ coincides with the pointset of  a variety $\VC$ which is the intersection of five quadrics, namely $
  \VC=(\Q_1\cap\Q_2)\cap(\Q_3\cap\Q_4\cap\Q_5).
$ Note that this is a variety of dimension 2 and order 4 by Lemma~\ref{conic-v42}. 
This completes the proof of part 1.

  The variety-extension of $\VC$ is $\VC\star=\newQonestar\cap\newQtwostar\cap\newQthreestar\cap\newQfourstar\cap\newQfivestar$, so in particular, 
  \begin{eqnarray}\label{eqn:yay}
  \VC\star=\VCplus\star\cap\VB\star.
  \end{eqnarray}

We now determine the points of $\VC\star$. 
By Theorem~\ref{Fqq-conic-C}, the points of  $\VCplus\star$ are the points of $\PG(5,q^2)$ on the lines 
\begin{eqnarray}\label{eqn:cplus}
\{XY^q\st X,Y\in\Cplus\}.
\end{eqnarray}
  By Corollary~\ref{pi0-Bose}, the points of  $\VB\star$ are the points  of $\PG(5,q^2)$  on the lines \begin{eqnarray}\label{eqn:B}
  \{X(\conjB{X})^q\st X\in\B\}.
  \end{eqnarray}
  By Lemma~\ref{lem:hypcong},  lines in (\ref{eqn:cplus}) and   (\ref{eqn:B}) either coincide, are disjoint, or meet in a point of $\Pit$ or $\Pit^q$. 
 Thus by (\ref{eqn:yay}),  $\VC\star$ consists of points on the set of lines which are in both (\ref{eqn:cplus}) and   (\ref{eqn:B}). That is,  $\VC\star$ consists of the points  of $\PG(5,q^2)$  on  the lines $\{X(\conjB{X})^q\st X\in\Cplus\}$.

 By Corollary~\ref{pi0-Bose}, the points of the variety-extension $\VB\star$ 
 form a scroll.
 As the lines of  $\VC\star$ are a subset of the lines of $\VB\star$, the  lines of $\VC\star$ form  a scroll, ruled by the same homography as for the scroll $\VB\star$. This completes the proof of part 2.
\end{proof}

Recall Remark~\ref{remark:BBexact} where we discuss the convention in the Bruck-Bose representation to not always include lines contained in $\si$ in our description, that is to not always be exact-at-infinity.  However, when looking at the Bruck-Bose representation  as a subset of the Bose representation, we 
 obtain the exact-at-infinity Bruck-Bose representation $\IBB=\IB\cap\Pi_g$.
In particular, the exact-at-infinity Bruck-Bose representation of a non-degenerate conic $\bar \C$ is the set 
$[\C]=\Bo{\C}\cap\Pi_g$. 
The pointset of $[\C]$ coincides with the pointset of the variety $\V([\C])=\VC\cap\Pi_g$.
We look at the extension of this variety, namely  $\V([\C])\star$.   
The next result   uses the exact-at-infinity setting, and so the variety   $\V([\C])\star$ may well contain lines in $\sistar=\langle g,g^q\rangle$, which   may account for the intersections with the transversal line $g$ of $\S$. 

\begin{corollary} \Label{cor-cplus-BB}
Let $\bar \C$ be an $\Fq$-conic in the Baer subplane $\bar \B$ of $\PG(2,q^2)$, then  $\bar P\in \bar\Cplus\cap\li $ if and only if in    the exact-at-infinity Bruck-Bose representation  in $\PG(4,q)$, 
 $P\in\V([\C])\star\cap g$.
\end{corollary}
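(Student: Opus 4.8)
The plan is to transfer the statement from the Bruck--Bose picture into the Bose picture, where Theorem~\ref{conic-subplane-A} gives us complete control of the ruling lines of $\VC\star$. Recall that $\Pi_g$ is a 4-space of $\PG(5,q)$ with $\Pigstar\cap\Pit=g$, that $[\C]=\Bo{\C}\cap\Pi_g$, and that $\V([\C])=\VC\cap\Pi_g$, so that by variety-extension $\V([\C])\star=\VC\star\cap\Pigstar$. Since $g=\Pigstar\cap\Pit$, intersecting with $g$ amounts to intersecting $\VC\star\cap\Pigstar$ with $\Pit$, i.e.
$$
\V([\C])\star\cap g \;=\; \VC\star\cap\Pigstar\cap\Pit \;=\; \bigl(\VC\star\cap\Pit\bigr)\cap g.
$$
So the whole corollary reduces to computing $\VC\star\cap\Pit$ and checking it equals $\Cplus$ (as a subset of $\Pit\cong\PG(2,q^2)$, identified via $P\leftrightarrow\bar P$), because then $\VC\star\cap g=\Cplus\cap g$, which under the correspondence $\li\leftrightarrow g$ is exactly $\bar\Cplus\cap\li$.

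The first step is therefore to show $\VC\star\cap\Pit=\Cplus$. By Theorem~\ref{conic-subplane-A}(2), the pointset of $\VC\star$ is ruled by the lines $X(\conjB{X})^q$ for $X\in\Cplus$. For $X\in\Cplus$ we have $X\in\Pit$ and $(\conjB{X})^q\in\Pit^q$, and since $\Pit\cap\Pit^q=\emptyset$ the line $X(\conjB{X})^q$ meets $\Pit$ in exactly the single point $X$ (it cannot lie in $\Pit$, and it already meets $\Pit$ at $X$). Running over all $X\in\Cplus$, the ruling lines contribute precisely the points of $\Cplus$ to $\Pit$; no other point of $\VC\star$ lies in $\Pit$ because every point of $\VC\star$ lies on some ruling line. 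Hence $\VC\star\cap\Pit=\Cplus$ as a pointset of the transversal plane.

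The second step is purely bookkeeping with the correspondences established in Section~\ref{sec:intro-BBinB}: a point $\bar P\in\li$ corresponds to $P\in g\subset\Pit$, and $\bar P\in\bar\Cplus\cap\li$ iff $P\in\Cplus\cap g$. Combining with Step~1, $P\in\Cplus\cap g$ iff $P\in(\VC\star\cap\Pit)\cap g=\V([\C])\star\cap g$ by the displayed identity above. This chain of equivalences is the assertion. The only point that needs a word of care --- and the one I expect to be the main (minor) obstacle --- is the exact-at-infinity subtlety flagged before the statement: $\V([\C])\star$ may contain lines of $\sistar=\langle g,g^q\rangle$, so one should check that these do not introduce spurious intersections with $g$. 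But this is automatic from the computation: we intersected $\VC\star$ directly with $\Pit$, and any extra component of $\V([\C])\star$ sitting in $\si$ or $\sistar$ still meets $g$ only inside $\VC\star\cap\Pit=\Cplus$, so no new points on $g$ arise. Thus the equivalence holds exactly as stated.
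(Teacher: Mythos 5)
Your proposal is correct and follows essentially the same route as the paper: both arguments reduce the claim to the identity $\VC\star\cap\Pit=\Cplus$ and then intersect with $g=\Pigstar\cap\Pit$, using $\V([\C])\star=\VC\star\cap\Pigstar$. The only (cosmetic) difference is that you obtain $\VC\star\cap\Pit=\Cplus$ directly from the ruling-line description in Theorem~\ref{conic-subplane-A}(2), whereas the paper re-derives it from the decomposition $\VC\star=\VCplus\star\cap\VB\star$ together with Theorem~\ref{Fqq-conic-C} and Corollary~\ref{pi0-Bose}.
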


\begin{proof}
We will show that $\Cplus\cap g=\V([\C])\star\cap g$.  Using the notation in the proof of Theorem~\ref{conic-subplane-A}, and intersecting both sides of (\ref{eqn:yay})  with  the transversal plane $\Pit$ gives
\begin{eqnarray}\label{eqn1}\VC\star\cap\Pit=\VCplus\star\cap\VB\star\cap\Pit.
\end{eqnarray}
By Corollary~\ref{pi0-Bose}, $\VB\star$ is ruled by  the lines $X\conjB{X}$ for $X\in\Pit$, and so  $\Pit\subset\VB\star$. Similarly, by Theorem~\ref{Fqq-conic-C}, $\Pit\cap\VCplus\star=\Cplus$. Hence
 (\ref{eqn1}) becomes $ \VC\star\cap\Pit=\Cplus$, and intersecting with $g$ yields 
\begin{eqnarray}\label{eqn2}
 \VC\star\cap g=\Cplus\cap g.\end{eqnarray}

Now let $\Pi_g$ be a 4-space whose extension meets $\Gamma$ in the line $g$. So we can construct the exact-at-infinity Bruck-Bose representation as 
 $\IBB=\IB\cap\Pi_g$. The Bruck-Bose representation of $\bar \C$ in the 4-space $\Pi_g$ is $[\C]=\Bo\C\cap\Pi_g$. Hence the pointset of $[\C]$ form a variety $\V([\C])=\VC\cap\Pi_g$.  In the quadratic extension, we have $\V([\C])\star=\VC\star\cap\Pigstar$, and intersecting with $g$ yields $\V([\C])\star\cap g=\VC\star\cap g$. Equating with (\ref{eqn2}) yields $\V([\C])\star\cap g=\Cplus\cap g$ as required.
\end{proof}

 \section{$\Fq$-conics in the  exact-at-infinity Bruck-Bose representation}

In the rest of this article, we use our results from the Bose representation setting to examine $\Fq$-conics in the Bruck-Bose representation. In particular, we give a geometric explanation as to why the notion of `special' arises in the Bruck-Bose representation.

 In \cite{BJW}, the authors show that an $\Fq$-conic $\bar \C$ contained in a Baer subplane tangent to $\li$ in $\PG(2,q^2)$ corresponds in the Bruck-Bose representation in $\PG(4,q)$ to a $g$-{\em special} normal rational curve of order 3 or 4. 
 By using the Bose representation, we show in Theorem~\ref{conic-quartic} that  in    the exact-at-infinity Bruck-Bose representation  in $\PG(4,q)$, the variety $\V([\C])$ is a quartic curve. This theorem also incorporates the case of an $\Fq$-conic in a secant Baer subplane. The full intersection of $\V([\C])$ with the hyperplane at infinity is straightforward to determine in this setting, and the details in the five distinct cases are given in  Corollary~\ref{cor:gspecial}.

\begin{theorem}\Label{conic-quartic} Let $\bar \C$ be an $\Fq$-conic in a Baer subplane $\bar \B$ of $\PG(2,q^2)$. 
Then in   the exact-at-infinity Bruck-Bose representation in $\PG(4,q)$, $\V([\C])$ is a quartic curve, which is either  a  non-degenerate conic and two lines; a twisted cubic and a spread line; or a 4-dimensional normal rational curve. 
\end{theorem}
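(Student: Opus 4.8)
The starting point is Theorem~\ref{conic-subplane-A}, which tells us that $\VC$ is a variety of dimension $2$ and order $4$ in $\PG(5,q)$, whose pointset coincides with the scroll $\Bo{\C}$, and whose extension $\VC\star$ is ruled by the $q^2+1$ lines $\{X(\conjB{X})^q\st X\in\Cplus\}$. To pass to Bruck-Bose we intersect with the $4$-space $\Pi_g$, so $\V([\C])=\VC\cap\Pi_g$. Since $\VC$ has dimension $2$ and $\Pi_g$ is a hyperplane of $\PG(5,q)$, generically $\V([\C])$ is a curve; the plan is to show it always has dimension $1$ and degree $4$, and then to identify which degenerate or non-degenerate quartic curve arises according to how $g$ meets $\Cplus$ and $\B$.

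\textbf{Step 1: $\V([\C])$ is a quartic curve.} Work in the extension. By \eqref{eqn:yay}, $\V([\C])\star=\VC\star\cap\Pigstar=\VCplus\star\cap\VB\star\cap\Pigstar$, and each point of $\VC\star$ lies on a unique scroll-line $X(\conjB{X})^q$, $X\in\Cplus$. A hyperplane $\Pigstar$ meets such a scroll-line in (generically) one point, and there are $q^2+1$ ruling lines, so the intersection should be a curve carrying roughly $q^2+1$ points — consistent with degree $4$ after accounting for the behaviour at the two "base points" of the Segre scroll (recall the map $F$ in Lemma~\ref{conic-v42} has a two-point kernel). More precisely, I would invoke Lemma~\ref{conic-v42}: $\VC$ is the image of a plane $M_2\cong\PG(2,q)$ under a cubic map $F$ with two-point kernel, and $\V([\C])=\VC\cap\Pi_g$ pulls back to the intersection of $M_2$ with a cubic $\K$ through $\ker F$ (the cubic cut out by the extra linear form defining $\Pi_g$ inside $\PG(5,q)$). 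That plane cubic $\K$, minus the two base points of the kernel, maps one-to-one onto $\V([\C])$; a plane cubic has degree $3$, but the Veronese-type map $F$ doubles degrees in the relevant sense, giving a curve of order $4$ in $\PG(5,q)$. I would make this precise by computing the order of $\V([\C])$ directly: a generic $3$-space of $\Pi_g=\PG(4,q)$ is a generic $3$-space of $\PG(5,q)$ lying in $\Pi_g$, and by the order-$4$ statement of Lemma~\ref{conic-v42} it meets $\VC$ in $4$ points, all of which lie in $\Pi_g$, hence in $\V([\C])$. So $\V([\C])$ has order $4$, and being of dimension $1$ it is a quartic curve.

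\textbf{Step 2: classifying the quartic.} The quartic $\V([\C])$ lies in $\Pi_g=\PG(4,q)$, and its behaviour is controlled by the position of the line $g=\Pigstar\cap\Pit$ relative to $\Cplus$ and to $\B$ (equivalently, relative to $\C$). The cases: (a) $\bar\B$ secant to $\li$, i.e.\ $g$ is a secant line of the Baer subplane $\B$ (so $g\cap\B$ is a Baer subline meeting $\C$ in $0$, $1$, or $2$ points of $\C$), versus (b) $\bar\B$ tangent to $\li$, i.e.\ $g$ meets $\B$ in a single point. In case (b) we recover the results of \cite{BJW}: $\V([\C])$ is a $3$- or $4$-dimensional normal rational curve according as $\Cplus\cap g$ is one or two points. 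In case (a), $g$ carries a full Baer subline of $\B$, and the ruling lines $X(\conjB{X})^q$ with $X$ ranging over that subline all lie in $\Pigstar$ (since for $X\in\B$ on $g$ we have $\conjB{X}=X$ on $g$ and $XX^q\cap\PG(5,q)$ is a spread line of $\si$); this produces components of $\V([\C])$ contained in $\si$ — a spread line and/or a pair of lines — and the residual curve is a twisted cubic or a conic. I would enumerate: $\Cplus\cap g$ empty gives (by a degree count on the residual after removing the $\si$-part) a non-degenerate conic plus two lines; $\Cplus\cap g$ a single point gives a twisted cubic plus a spread line; $\Cplus\cap g$ two points gives... — and cross-check that exactly five cases survive, matching Corollary~\ref{cor:gspecial}. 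The clean way to run this is to intersect $\V([\C])\star$ with $\sistar=\langle g,g^q\rangle$ using the ruling-line description: a scroll-line $X(\conjB{X})^q$ lies in $\sistar$ iff $X\in g$ (hence $X\in\B\cap g$ and the line is a spread line), and meets $\sistar$ in a single point of $g$ iff $X\in\Cplus\cap g$ but $X\notin\B$. This directly reads off the intersection of $\V([\C])$ with $\si$ and hence the decomposition of the quartic.

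\textbf{Main obstacle.} The genuine work is Step 1 — proving $\V([\C])$ is always a curve (not occasionally a surface-section anomaly or a curve of smaller/larger degree) and pinning its order to exactly $4$. The subtlety is that $\Pi_g$ is \emph{not} a generic hyperplane: it is constrained to contain a spread $3$-space $\si$ and to have $\Pigstar\cap\Pit=g$ a line, so one must check that the order-$4$ count of Lemma~\ref{conic-v42} is not corrupted by $\V([\C])$ acquiring a lower-dimensional or extraneous component, or by the quartic degenerating in a way that changes its degree. Handling the base points of the Segre scroll (the kernel of $F$), which can fall inside or outside $\Pi_g$ and can be points of $\si$, is where the five cases genuinely bifurcate, and getting the bookkeeping right there — especially distinguishing "the quartic contains a spread line" from "the quartic meets $\si$ in a point with multiplicity" — is the delicate part. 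Once that is controlled, Step 2 is essentially a finite case analysis driven by $|\Cplus\cap g|\in\{0,1,2\}$ together with whether $\bar\B$ is secant or tangent to $\li$.
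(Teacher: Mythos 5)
Your Step 1 is essentially the paper's argument: $\VC$ is a variety of dimension 2 and order 4 by Lemma~\ref{conic-v42}, and intersecting with the hyperplane $\Pi_g$ gives a curve of order 4 (the paper states this as $\V^4_2\cap\V^1_4=\V^4_1$ "in general" and moves on). Your digression about the cubic map $F$ "doubling degrees" is neither needed nor quite right, but you correctly fall back on the direct order count, so that part stands.

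The gaps are in Step 2. First, your organizing parameter $|\Cplus\cap g|\in\{0,1,2\}$ is the wrong one: $\Cplus$ is a conic of the plane $\Pit\cong\PG(2,q^2)$ and $g$ is a line of $\Pit$, so $\Cplus\cap g$ always consists of two points counted with multiplicity (possibly coincident, possibly in the extension $\PG(2,q^4)$); the case "$\Cplus\cap g$ empty" never occurs. The correct bifurcation, as in the paper, is on whether $g$ is secant or tangent to $\B$, and then on $|\C\cap g|$ (respectively, on whether $g\cap\B\in\C$). Second, your "clean way" criterion is incorrect in exactly the case where it is needed: you claim a scroll-line $X(\conjB{X})^q$ lies in $\sistar$ iff $X\in g$, "hence $X\in\B\cap g$ and the line is a spread line", and meets $\sistar$ in a single point iff $X\in\Cplus\cap g$ but $X\notin\B$. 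In the secant case with $g$ exterior to $\C$ one has $\Cplus\cap g=\{X,\conjB{X}\}$ with $X\notin\B$; since $g$ is a line of $\B$, $\conjB{X}$ also lies on $g$, so the scroll-lines $X(\conjB{X})^q$ and $\conjB{X}X^q$ lie entirely in $\sistar=\langle g,g^q\rangle$ and yet are not spread lines (they are disjoint from $\PG(5,q)$). This is precisely case 1(c) of Corollary~\ref{cor:gspecial}, and your criterion mis-sorts it, which is why your enumeration trails off at "two points gives\ldots". Third, you assert but never derive the shape of the residual components: the paper obtains the non-degenerate conic in the secant case from the decomposition of $[\B]=\lbose\B\rbose\cap\Pi_g$ into a ruling plane $\alpha$ of the Segre variety together with a regulus (so $[\C]\cap\alpha$ is a conic), and obtains the twisted cubic and the 4-dimensional normal rational curve in the tangent case by observing that $\V([\C])$ meets each line of $\lbose\C\rbose$ other than $[T]$ in exactly one affine point. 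Without these arguments the classification of the quartic into the three stated types is not established.
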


\begin{proof} 
We work in the Bose representation of $\PG(2,q^2)$, so in $\PG(5,q^2)$, $\C$ is an $\Fq$-conic in the Baer subplane $\B$ of  the  transversal plane  $\Pit$.
As usual, let $X\mapsto \conjB{X}$ denote conjugacy in $\Pit$ with respect to  $\B$.  Let $g$ be a line of $\Pit$, and let $\Pi_g$ be a 4-space of $\PG(5,q)$ containing the 3-space $\si=\langle g,g^q\rangle\cap\PG(5,q)$. So $\Pi_g$ gives the Bruck-Bose representation of $\PG(2,q^2)$ with line at infinity  $\li$ corresponding to $g$. Moreover, $g,g^q$ are the transversal lines of the regular 1-spread $\S$ of $\si$.   

By Theorem~\ref{conic-subplane-A}, the points of $\lbose\C\rbose$ form a scroll, and lie on a variety $\VC$ which is the intersection of five quadrics, $\VC=\Q_1\cap\cdots\cap\Q_5$. 
As $\lbose\C\rbose$ is a scroll, by 
Lemma~\ref{conic-v42}, the pointset of $\lbose\C\rbose$ form a variety  $\VC=\V_2^4$ of $\PG(5,q)$. 
The 4-space $\Pi_g$ is a $\V^1_4$, and in $\PG(5,q)$, in general $\V_2^4\cap\V^1_4=\V^4_1$. So the Bruck-Bose representation of $\bar\C$ in the 4-space $\Pi_g$ is the set of points lying on the quartic curve $\V([\C])=\VC\cap\Pi_g=\Q_1\cap\cdots\cap\Q_5\cap\Pi_g$. 

 By Theorem~\ref{conic-subplane-A}, the variety-extension and scroll-extension of $\lbose\C\rbose$ coincide. That is, $\VC\star=\newQonestar\cap\cdots\cap\newQfivestar$  is the intersection of five quadrics; and the points of $\VC\star$ coincide with those on the scroll with ruling lines 
$\{X(\conjB{X})^q\st X\in\Cplus\}.$
Hence  the only lines the quartic curve $\V([\C])\star=\VC\star\cap \Pigstar$ 
 can contain are the scroll-lines, $X(\conjB{X})^q$ for $X\in\Cplus\cap g$. 

We now determine the precise form of the set $[\C]=\lbose\C\rbose\cap\Pi_g$. That is, in each of the possible cases, we determine the structure of the quartic curve $\V([\C])$. We consider the cases where $\bar \B$ is secant or tangent  to $\li$  separately. 
First suppose   $\bar \B$ is secant to $\li$, so in $\Pit$, $g$ is secant to $\B$. Using  Result~\ref{result-HT},  the set $[\B]=\lbose\B\rbose\cap\Pi_g$ consists of two parts. Firstly $[\B]$ contains   one of the ruling  planes of the Segre variety $\Bo{\B}$. Denote this plane by  $\alpha$,  and note that $\alpha$ meets $\si$ in a line that meets a regulus $\R$ of $\S$. The second part of $[\B]$ is the lines of the regulus $\R$. 
Hence the set 
 $[\C]=\lbose\C\rbose\cap\Pi_g$ meets $\alpha$ in a conic, denoted $
 \O$, and contains the lines (if any) of the regulus $\R$ that lie in $\Bo{\C}$. We  describe these lines and their relationship to $\O$  in more detail in the three cases where $g$ is secant, tangent and exterior to $\C$. 
If $g$ is a secant to $\C$ with $g\cap\C=\{P,Q\}$, then the quartic curve  $\V([\C])$ consists of  the two spread lines $\lbose P\rbose =[P]$, $\lbose Q\rbose=[Q]$, and the conic $\O$ which meets both $[P]$ and $[Q]$ in  a point. 
 If $g$ is a tangent to $\C$ with $g\cap\C=\{T\}$, then the quartic curve  $\V([\C])$ consists  of  the repeated spread line $\lbose T\rbose =[T]$, and the  conic $\O$ which is tangent to $\alpha\cap\si$ in a point of $[T]$.
If $g$ is  exterior to $\C$, then   $\Cplus\cap g=\{ X, {\conjB{X}}\}$ for some $X\in\Pit\setminus\B$.  In this case  
 the quartic curve  $\V([\C])$ consists of  the conic $\O$, and two lines which lie in the extension $\sistar\setminus\si$. 
By Theorem~\ref{conic-subplane-A} and Corollary~\ref{cor-cplus-BB}, these two lines are  the scroll-lines $X(\conjB{X})^q,\ \conjB{X} X^q$. 
 
Now suppose  $\bar \B$ is tangent to $\li$, so in $\PG(5,q^2)$, $g$ is tangent to $\B$. If $T=\B\cap g\in\C$, then  the quartic curve  $\V([\C])=\VC\cap\Pi_g$ contains the spread line $\lbose T\rbose=[T]$ and one point in each of the lines of $\lbose\C\rbose\setminus\lbose T\rbose$. Hence the quartic curve $\V([\C])$ is the spread line $\lbose T\rbose=[T]$ and a twisted cubic. 

If $T=\B\cap g\notin\C$, then the quartic curve  $\V([\C])=\VC\cap\Pi_g$ contains no spread line, and is disjoint from $\si=\langle g,g^q\rangle\cap\PG(5,q)$, hence is a 4-dimensional normal rational curve. Moreover, by Corollary~\ref{cor-cplus-BB}, this 4-dimensional normal rational curve   meets the line $g$ in the points $X,Y$, where $\Cplus$ meets $ g$ in the two points $\{ X, Y\}$ (possibly equal, possibly in an extension). 
\end{proof}

In particular, the proof describes the details of five sub-cases, which we summarise in the next corollary. 
\begin{corollary}
\Label{cor:gspecial}
  Let $\bar \C$ be an $\Fq$-conic in a Baer subplane $\bar \B$ of $\PG(2,q^2)$.  Then in exact-at-infinity Bruck-Bose representation  in $\PG(4,q)$, the pointset of $[\C]$ coincides with the pointset of a quartic curve denoted $\V([\C])$. 
\begin{enumerate}
\item Suppose  $\bar \B$ is secant to $\li$. 
\begin{enumerate}
\item If $\bar \C\cap \li=\{\bar P,\bar Q\}\in\bar\B$, then  $\V([\C])$ is  the spread lines $[P],[Q]$ together with a non-degenerate conic which meets $\si$ in a point of $[P]$ and a point of $[Q]$.
\item If $\bar \C\cap \li=\{\bar T\}\in\bar \B$, then $\V([\C])$ is a  non-degenerate conic which meets $\si$ in a repeated  point on $[T]$, together  with the repeated spread line $[T]$.
\item If $\bar \C\cap \li=\emptyset$, then $\bar \Cplus\cap \li=\emptyset$ and the intersections of $\bar \Cplus$ with $\li$ are two points $\{\bar X,\bar Y\}$ in $\PG(2,q^4)\setminus\PG(2,q^2)$, with  $\bar Y={{\bar X}^{{\bar{\mathsf c}}_{\scalebox{0.55}{\mbox{$\pi$}}}}}$. 
In this case, the curve  $\V([\C])$ is a  non-degenerate conic $\N_2$ disjoint from $\si$. Further, in $\PG(4,q^2)$, $\V([\C])\star$ contains the two  scroll-lines $X(\conjB{X})^q,\ \conjB{X} X^q$, and the two points $\Ntwostar\cap\sistar$ lie one on each scroll-line (and are not on $ g$ or $g^q$).
\end{enumerate}
\item Suppose $\bar \B$ is tangent to $\li$. 
\begin{enumerate}
\item If $\bar T=\bar\B\cap\li\in\bar  \C$, then  $\bar \Cplus\cap \li=\{\bar T,\bar L\}$ with $\bar T\neq \bar L$. The curve  $\V([\C])$ is 
a twisted cubic $\N_3$ together with the spread line $[T]$. Further, in $\PG(4,q^2)$, the three points of $\Nthreestar\cap\sistar$ consist of one real point on $[T]$, and the points $L\in g,L^q\in g^q$. 
\item  If   $\bar \C\cap \li=\emptyset$, then $\bar\Cplus$ meets $ \li$ in two points $\{\bar P,\bar Q\}$, possibly equal, possibly in the extension  $\PG(2,q^4)$. In this case,   $\V([\C])$  is a 4-dimensional normal rational curve $\N_4$. Further, in $\PG(4,q^2)$, the four points $\Nfourstar\cap\sistar$ are $P,Q\in g$, $P^q,Q^q\in g^q$   (possibly $P=Q$, possibly $P,Q$ lie in the extension $\PG(4,q^4)$).
\end{enumerate}
\end{enumerate}
\end{corollary}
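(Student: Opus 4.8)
The plan is to read off Corollary~\ref{cor:gspecial} directly from the proof of Theorem~\ref{conic-quartic}, since that proof already walks through exactly these five sub-cases; the corollary is essentially a restatement, so the work is to organise the case analysis cleanly and fill in the one genuinely new piece of information, namely the precise location of the points $\V([\C])\star\cap\sistar$ in each case. I would begin by recording once and for all that, by Theorem~\ref{conic-quartic}, $\V([\C])=\VC\cap\Pi_g$ is a quartic curve, and that by Theorem~\ref{conic-subplane-A}(2) together with Lemma~\ref{lem:hypcong}, the only lines $\V([\C])\star=\VC\star\cap\Pigstar$ can contain are scroll-lines $X(\conjB{X})^q$ with $X\in\Cplus\cap g$. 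Then I would split on whether $\bar\B$ is secant or tangent to $\li$, i.e.\ whether $g$ is secant or tangent to $\B$ in $\Pit$.

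In the secant case, I would invoke Result~\ref{result-HT} as in the proof of Theorem~\ref{conic-quartic}: $[\B]=\lbose\B\rbose\cap\Pi_g$ consists of a ruling plane $\alpha$ of the Segre variety $\lbose\B\rbose$, meeting $\si$ in a line of a regulus $\R\subset\S$, together with the $q+1$ lines of $\R$. Intersecting with $\lbose\C\rbose$, the curve $\V([\C])$ meets $\alpha$ in a non-degenerate conic $\O$ and contains whichever lines of $\R$ lie in $\lbose\C\rbose$; these correspond to the points of $\C\cap g$. For (1)(a), $g\cap\C=\{P,Q\}$ gives the two spread lines $[P],[Q]$ plus $\O$, with $\O$ meeting $\si$ at $\alpha\cap\si$ which lies on each of $[P],[Q]$; for (1)(b), $g\cap\C=\{T\}$ gives the repeated spread line $[T]$ and $\O$ tangent to $\alpha\cap\si$ at a point of $[T]$; for (1)(c), $g\cap\C=\emptyset$ forces $g\cap\Cplus=\{X,\conjB{X}\}$ with $X\in\Pit\setminus\B$ (using that $\mathsf c_\pi$ fixes $\B$ and $g$ is an $\B$-secant line, so conjugate-in-$\Cplus$ means conjugate under $\mathsf c_\pi$), the curve is a conic $\N_2$ disjoint from $\si$, and by Corollary~\ref{cor-cplus-BB} the scroll-lines $X(\conjB{X})^q$, $\conjB{X}X^q$ are the two lines of $\V([\C])\star$; since these scroll-lines are disjoint from $\PG(5,q)$ (they contain no conjugate pair), the two points $\Ntwostar\cap\sistar$ lie one on each and are off $g,g^q$.

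In the tangent case ($g$ tangent to $\B$ at the single point $T$), I would split on whether $T\in\C$. If $T\in\C$ (sub-case (2)(a)), $\V([\C])$ contains the spread line $[T]=\lbose T\rbose$ and meets each remaining scroll-line of $\lbose\C\rbose$ in one point, giving a twisted cubic $\N_3$; here $\Cplus\cap g=\{T,L\}$ with $L\neq T$ (a tangent line to $\B$ is secant to $\Cplus$ unless it is tangent to $\Cplus$ at $T$, which would force $L=T$, contradicting $T\in\B$ on a tangent line — this needs a line of reasoning I should spell out), and the three points $\Nthreestar\cap\sistar$ are the real point $[T]\star\cap\sistar$ on $[T]$ together with $L\in g$ and $L^q\in g^q$, the latter two by Corollary~\ref{cor-cplus-BB}. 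If $T\notin\C$ (sub-case (2)(b)), $\V([\C])$ contains no spread line and is disjoint from $\si$, hence is a $4$-dimensional normal rational curve $\N_4$; by Corollary~\ref{cor-cplus-BB}, $\Nfourstar\cap g=\Cplus\cap g=\{P,Q\}$, and applying the conjugacy map $X\mapsto X^q$ (which stabilises $\Nfourstarstar$ since $\V([\C])$ is defined over $\Fq$) gives the remaining two points $P^q,Q^q\in g^q$.

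The main obstacle is not any single deep step but rather the careful bookkeeping in the tangent sub-cases of how many points the scroll-lines contribute to the intersection with $\sistar$ and whether they are ``real'' (in $\si$) or lie in the extension, and correctly identifying $\Cplus\cap g$ in each case from the position of $g$ relative to $\B$ and $\C$ — in particular the argument in (2)(a) that a line tangent to $\B$ at $T\in\C$ meets $\Cplus$ in a second, distinct point $L$, and in (1)(c)/(2)(b) the precise field of definition of the points $X,Y$ (they lie in $\PG(2,q^2)$, or its quadratic extension $\PG(2,q^4)$, according to whether the relevant quadratic over the subfield has roots). These are all elementary once set up, so I would present the five cases in a compact list mirroring the corollary statement, citing Theorem~\ref{conic-quartic} and Corollary~\ref{cor-cplus-BB} for the bulk of the content and only supplying the short extra arguments about the intersection points with $\sistar$.
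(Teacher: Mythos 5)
Your proposal is correct and is essentially the paper's own route: the paper offers no separate argument for Corollary~\ref{cor:gspecial}, stating explicitly that it merely summarises the five sub-cases already worked out in the proof of Theorem~\ref{conic-quartic}, and the extra information about the points at infinity is extracted there exactly as you do, via Corollary~\ref{cor-cplus-BB} together with the scroll-line description from Theorem~\ref{conic-subplane-A} and conjugacy under $X\mapsto X^q$. The small arguments you flag as needing to be spelled out (that $L\neq \,T$ in case 2(a) because the tangent to $\Cplus$ at $T$ is a line of $\B$ and hence not $g$, and the bookkeeping of which intersection points are real versus in an extension) are genuine but elementary, and note that in case 1(c) your placement of the two points of $\bar\Cplus\cap\li$ in $\PG(2,q^2)\setminus\bar\B$ agrees with the theorem's proof (which gives $\Cplus\cap g=\{X,\conjB{X}\}$ with $X\in\Pit\setminus\B$) rather than with the corollary's literal wording.
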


\section{2-special normal rational curves in $\PG(4,q)$}\Label{sec-2special}

We now use the insight gained by working in the Bose representation of $\PG(2,q^2)$ in $\PG(5,q)$ to look at the Bruck-Bose representation of $\PG(2,q^2)$ in $\PG(4,q)$. Result~\ref{g-spec-know}(4) characterised $\Fq$-conics in tangent Baer subplanes
as $g$-special normal rational curves of order $r=3$ or 4 in $\PG(4,q)$. Here we refine the  notion of special to include the normal rational curves of order $r=2$, and so incorporate all five cases from Corollary~\ref{cor:gspecial}. That is, we give one characterisation which  includes all $\Fq$-conics in   both the secant and the tangent Baer subplane cases. This characterisation provides a better understanding of the structure of the curves at infinity. 

We define a {\em weight}, denoted $w(P)$, for each point  $P\in\sistar\cong\PG(3,q^2)$ in relation to the transversals of a regular spread $\S$. A point $P$ has  $w(P)=1$ if $P$ lies in one of the transversals of $\S$, otherwise $w(P)=2$. 
Similarly  a point $P$ in $\sistarstar\cong\PG(3,q^4)$ has  $w(P)=1$ if $P$ lies in $\gstar$ or $\gqstar$, otherwise $w(P)=2$.

\begin{defn}
 Let $\N_r$ be a normal rational curve of order $r$ in $\PG(4,q)$, so $\N_r$ meets 
 $\si$, the hyperplane at infinity, in $r$ points  
$\{P_1,\ldots,P_r\}$, these points may be repeated, or in an extension. We say $\N_r$ is a {\em 2-special normal rational curve} if $w(P_1)+\cdots+w(P_r)=4$. 
 \end{defn}
 

This 2-special definition leads to a single short characterisation of all $\Fq$-conics in $\PG(2,q^2)$. 
Note that while  Corollary~\ref{cor:gspecial} gives the exact-at-infinity correspondence, in the next theorem we use the
 usual Bruck-Bose convention where we do not include lines at infinity   in our description (see Remark~\ref{remark:BBexact}). That is, for an $\Fq$-conic $\bar \C$ of $\PG(2,q^2)$, in our description of the Bruck-Bose representation of $[\C]$, we only include the non-linear component of the quartic curve $\V([\C])$. Hence by Theorem,~\ref{conic-quartic}, $[\C]$ is a non-degenerate conic, a twisted cubic or a 4-dimensional normal rational curve, that is, a $k$-dimensional normal rational curve with $k$ equal to 2, 3 or 4. Note we do not need the variety notation in this description as a normal rational curve is a variety, so there is no confusion about which variety we are extending. 

\begin{theorem}\Label{thm:Fqconic-unify}
A set $\bar\C$ in $\PG(2,q^2)$ is an $\Fq$-conic if and only if the corresponding set $[\C]$ in the Bruck-Bose $\PG(4,q)$ representation is 
 a 2-special normal rational curve.
\end{theorem}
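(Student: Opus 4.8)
The plan is to prove both directions by combining the earlier results in the Bose-inside-Bruck-Bose setting, so that essentially all the geometric content is already done. For the forward direction, let $\bar\C$ be an $\Fq$-conic in a Baer subplane $\bar\B$. By Theorem~\ref{conic-quartic} (in its non-exact-at-infinity formulation as set up in the paragraph preceding this theorem), $[\C]$ is a $k$-dimensional normal rational curve $\N_k$ with $k\in\{2,3,4\}$, sitting inside the variety $\V([\C])$. It remains only to check that $\N_k$ is $2$-special, i.e.\ that the $\si$-points of $\N_k$ have weights summing to $4$. For this I would walk through the five cases of Corollary~\ref{cor:gspecial}: in case 1(c) the two points $\Ntwostar\cap\sistar$ are each off $g,g^q$, hence weight $2$ each, total $4$; in case 2(a) the twisted cubic meets $\si$ in a real point on $[T]$ (this point lies on the spread line $[T]$, hence on neither transversal $g$ nor $g^q$, so weight $2$) together with $L\in g$ and $L^q\in g^q$ (each weight $1$), total $2+1+1=4$; in case 2(b) the four points are $P,Q\in g$ and $P^q,Q^q\in g^q$, each of weight $1$, total $4$. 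Cases 1(a) and 1(b) do not arise here since in the non-exact description the linear components are discarded, but note that even there the residual conic meets $\si$ inside the spread line $[P]$ (resp.\ $[Q]$, resp.\ $[T]$), away from $g,g^q$, giving weight-$2$ points: in 1(a) that is two weight-$2$ points, total $4$; in 1(b) a single repeated weight-$2$ point counting twice, again total $4$. So in every case $[\C]$ is $2$-special.

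\textbf{Converse.} Now suppose $[\C]$ is a $2$-special normal rational curve $\N_k$ of order $k\in\{2,3,4\}$ in $\PG(4,q)$, meeting $\si$ in points $P_1,\dots,P_k$ (with multiplicity / in an extension) of total weight $4$; I must produce an $\Fq$-conic $\bar\C$ of $\PG(2,q^2)$ with Bruck-Bose representation $[\C]$. The key step is to reverse the construction of Theorem~\ref{conic-quartic}: embed $\PG(4,q)=\Pi_g$ in $\PG(5,q)$ with its Bose spread $\mathbb S$, so $\si=\langle g,g^q\rangle\cap\PG(5,q)$ for transversal lines $g,g^q$ in the transversal plane $\Pit$. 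The weight hypothesis tells us exactly how $\N_k\star$ meets $g$: the points of $g$ on $\N_k\star$ are those $P_i$ of weight $1$, so $g\cap\N_k\star$ has size $0,1$ or $2$ according as $k=2$ (with $P_1,P_2$ both off $g,g^q$), or $k=3$ (one weight-$2$ point off $g,g^q$ on a spread line, and a $g/g^q$ conjugate pair), or $k=4$ (two $g/g^q$ conjugate pairs). One then takes $\langle\N_k\rangle$ in $\PG(5,q)$ (a $k$-space, or the $4$-space $\Pi_g$ itself when $k=4$), extends to $\PG(5,q^2)$, and uses Theorem~\ref{conic-subplane-A} / Corollary~\ref{pi0-Bose} in reverse: the unique scroll on $\N_k$ ruling it according to a projectivity determines a scroll $\scroll(\U,\W,\sigma)$ whose extension to $\PG(5,q^2)$ has ruling lines of the form $X(\conjB{X})^q$, and reading off $\U\cap\Pit$ recovers a conic $\Cplus$ of $\Pit$ together with the Baer subplane $\B$ whose conjugation is $\mathsf c_\pi$; then $\bar\C$ is the preimage in $\PG(2,q^2)$ of $\C=\Cplus\cap\B$. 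The weight-$4$ condition is precisely what guarantees the intersection pattern of $\N_k\star$ with $g$ matches one of the five admissible configurations, so that this reconstruction is consistent and yields a genuine $\Fq$-conic rather than some other variety.

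\textbf{Main obstacle.} The delicate point is the converse: showing that a $2$-special $\N_k$ really is the Bruck-Bose image of an $\Fq$-conic, and not merely something with the same intersection-at-infinity data. The cleanest route is uniqueness — a normal rational curve of order $k$ in $\PG(4,q)$ together with its position relative to the spread $\S$ is rigid enough that the $q^2+q+1$-point count and the scroll structure force it into the image of Theorem~\ref{conic-quartic}; concretely one observes that $\N_k$, being a normal rational curve, carries a unique projectivity ruling it to any curve it is projected onto, so the scroll reconstruction above is forced, and then Theorem~\ref{conic-subplane-A}(2) identifies the resulting object uniquely. One must be slightly careful in the $k=2$ case to argue that the two weight-$2$ intersection points, lying on two conjugate scroll-lines meeting $\Pit$ in a $\mathsf c_\pi$-conjugate pair $X,\conjB{X}$, genuinely arise from a secant Baer subplane (case 1(c)) and not from some degenerate configuration; the explicit coordinate description of $\Pit$, $g$ in Section~\ref{sec:coord}, or alternatively a direct count of points of $[\C]$ off $\si$ (which must equal $q^2+q+1-\#(\N_k\cap\si)$ to match a conic of $\PG(2,q^2)$), closes this gap. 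Once uniqueness is in hand, the converse follows by matching: for each admissible weight-$4$ pattern there is, by the forward direction, an $\Fq$-conic realising it, and by rigidity that $\Fq$-conic's Bruck-Bose image must be the given $\N_k$.
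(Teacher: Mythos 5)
Your forward direction is essentially the paper's: walk through the five cases of Corollary~\ref{cor:gspecial} and check that the weights at infinity sum to four. (Your remark that cases 1(a) and 1(b) ``do not arise'' is misleading --- they do arise; only the linear components of the quartic curve are discarded --- but you then compute the weights of the residual conic in those cases correctly, so the forward direction stands.)

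The converse is where the proposal has a genuine gap, and it is exactly the point you flag as ``the delicate point'' without resolving it. The paper's converse uses the weight condition only to pin down how the extension of $\N_r$ meets the transversals $g,g^q$, and then \emph{imports} the substantive content from \cite{BJW} (Theorems 5.4, 5.6 and 5.9 there), which state that a conic, twisted cubic or $4$-dimensional normal rational curve with precisely that intersection pattern is the Bruck--Bose image of an $\Fq$-conic; it also disposes of the order-$1$ case, which your restriction to $k\in\{2,3,4\}$ silently skips even though the definition of $2$-special permits $r=1$. Neither of your two proposed substitutes closes this. Route (a), ``reversing'' Theorem~\ref{conic-subplane-A}: that theorem goes from an $\Fq$-conic to a scroll with ruling lines $X(\conjB{X})^q$; it does not assert that an arbitrary normal rational curve of $\PG(4,q)$ with the right behaviour at infinity lies on such a scroll. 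Before you can ``read off $\U\cap\Pit$'' you must know that the union of the Bose spread lines through the affine points of $\N_k$ is a variety of the required type, i.e.\ that the corresponding point set of $\PG(2,q^2)$ extends to a conic of a Baer subplane --- which is the statement to be proved. Route (b), rigidity plus matching: the claim that the stabiliser of the spread $\S$ acts transitively on $2$-special normal rational curves with a given intersection pattern at infinity is asserted, not proved, and is not obviously easier than the theorem itself; the forward direction only shows that each pattern is realised by \emph{some} $\Fq$-conic, not that every curve realising that pattern is in its orbit. (The auxiliary count ``$q^2+q+1-\#(\N_k\cap\si)$'' is also off: an $\Fq$-conic and a normal rational curve of $\PG(4,q)$ each have $q+1$ points.) As written, the converse is a plan rather than a proof; to complete it along the paper's lines you would cite the characterisation theorems of \cite{BJW}, or else supply the coordinate or geometric argument that those theorems encapsulate.
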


\begin{proof} First suppose that $\bar\C$ is an $\Fq$-conic in a Baer subplane $\bar\B$ in $\PG(2,q^2)$, there are five cases to consider for $\bar\C$,  these are listed in  Corollary~\ref{cor:gspecial}, and we use the same numbering and notation here. As noted above,  our Bruck-Bose description of $[\C]$ only includes the non-linear component of the quartic curve $\V([\C])$. That is, 
 we omit any lines (which are contained in $\si$) in our description of $[\C]$; so by   Corollary~\ref{cor:gspecial}, $[\C]$ is a 
 $k$-dimensional normal rational curve, $k=2$, 3 or 4.

Case 1, suppose $\bar \B$ is secant to $\li$, then in $\IBB$, $[\B]$ is a plane not containing a spread line, and $[\C]$ is a conic in $[\B]$. We show that $[\C]$ is 2-special. In Case 1(a), $\bar \C\cap\li=\{\bar P, \bar Q\}\subset \bar \B$, so $[\C]$ meets $\si$ in two points $X=[P]\cap[\B]$, $Y=[Q]\cap[\B]$. The points $X,Y$ both have weight two,   so the points of $[\C]$ in $\si$ have weights summing to four. That is, $[\C]$ is a 2-special normal rational curve of order 2.  
In Case 1(b), $\bar \C\cap\li=\{\bar T\}\in\bar \B$, and  in $\IBB$, $[\C]\cap\si$ is the repeated point $Z=[T]\cap[\B]$. The point $Z$ has weight 2, which we count twice as $Z$ is a repeated point, so the weights sum to four and $[\C]$ is 2-special. 
In case  1(c),  by Corollary~\ref{cor:gspecial},   in $\IBB$, $[\C]$ is a conic whose extension  meets $\sistar$ in two points of weight 2, and so $[\C]$ is 2-special. 
For Case 2, suppose $\bar \B$ is tangent to $\li$.
In case 2(a), by Corollary~\ref{cor:gspecial},  in $\IBB$, $[\C]$ is a twisted cubic that meets $\si$ in one real point which has weight 2, and whose extension meets $\sistar$ in a point on $g$ and the conjugate point on $g^q$, both of weight 1. The weights  of these three points sum to four, hence  $[\C]$ is 2-special. In case 2(b),  by Corollary~\ref{cor:gspecial},  in $\IBB$, $[\C]$ is a 4-dimensional normal rational curve, whose extension meets $\si$ in four points which  lie on the transversals $g,g^q$ (or the extensions $\gstar,\gqstar$). Hence their weights sum to four, and $[\C]$ is 2-special. 

Conversely, let $\N_r$ be a 2-special normal rational curve of order $r$ in $\PG(4,q)$. We consider the four cases $r=1,2,3,4$ separately.
Note that as $\N_r$ is a curve defined over $\Fq$, if a point $P\in\sistar\setminus\si$ lies in the extension of $\N_r$, then so does the conjugate point $P^q$. 

Suppose $r=4$, so $\N_4$ is a 4-dimensional normal rational curve which meets $\si$ in four points $P_1,P_2,P_3,P_4$, which may be in the extension $\sistar$ or $\sistarstar$. As $\N_4$ is 2-special, $w(P_1)+w(P_2)+w(P_3)+w(P_4)=4$, and so  $w(P_i)=1$, $i=1,2,3,4$. So the four points lie on  the transversals $g,g^q$ (or the extensions $\gstar,\gqstar$). 
Hence $\N_4$ meets $g$ (or the extension $\gstar$) in two points $P,Q$ (possibly repeated) and meets $g^q$ (or $\gqstar$) in $P^q,Q^q$. 
Note that if $\N_4^\blackstar$ meets $\gstar\setminus g$ in two points, they have form $P,P^{q^2}$, and $\N_4^\blackstar$ meets $\gqstar\setminus g^q$ in the two points  $P^q,P^{q^3}$. 
By \cite[Thm 5.9]{BJW}, $\N_4$ corresponds in $\PG(2,q^2)$ to an $\Fq$-conic $\bar\C$ in a tangent Baer subplane $\bar\B$ with $\bar\B\cap\li\notin\bar\C$.

Suppose $r=3$, so $\N_3$ is a 3-dimensional normal rational curve which meets $\si$ in three points $P_1,P_2,P_3$ which may be in an extension.
As $\N_3$ is 2-special, 
 $w(P_1)+w(P_2)+w(P_3)=4$, and so $w(P_1)=2$, say, and $w(P_2)=w(P_3)=1$.   That is, $\N_3$ meets $\si$ in one real point $P_1\in\PG(3,q)$, one point  $P_2\in g$ and the conjugate point  $P_3=P_2^q\in g^q$. 
By  \cite[Thm 5.6]{BJW},  $\N_3$ corresponds  in $\PG(2,q^2)$  to an $\Fq$-conic $\bar\C$ in a tangent Baer subplane $\bar\B$ with $\bar\B\cap\li\in\bar\C$.

Suppose $r=2$, so $\N_2$ is a non-degenerate conic  which lies in a plane $\alpha$ not containing a spread line, and $\N_2$  meets $\si$ in two points $P,Q$ 
with $w(P)+w(Q)=4$, so $w(P)=w(Q)=2$. The plane $\alpha$ corresponds in $\PG(2,q^2)$ to a Baer subplane, denoted $\bar \B$, which is secant to $\li$, and $\N_2$ corresponds to an $\Fq$-conic denoted $\bar\C$ contained in $\bar\B$.
For completeness in the $r=2$ case, we look at the  three possibilities for the points $P,Q$ in more detail. The three cases are:
(a) $P,Q\in\si$ are distinct; (b) $P=Q\in\si$; (c) $P,Q\in\sistar\setminus\si$. 
In case (a), the points $P,Q$ lie on distinct spread lines which we denote $[X], [Y]$. In $\PG(2,q^2)$, the $\Fq$-conic $\bar \C$
 contains the corresponding points $\bar X,\bar Y\in\bar \B \cap \li$. 
In case (b), the point $P$ lies on a spread line which we denote $[X]$. In $\PG(2,q^2)$, the $\Fq$-conic $\bar \C$
 is tangent to $\li$ at the corresponding point $\bar X\in\bar\B$. 
In case (c), as $P,Q\in\sistar\setminus\si$, we have $Q=P^q$. Let $m=\alpha\cap\si$. 
 As the plane $\alpha$ corresponds to a Baer subplane of $\PG(2,q^2)$, $m$ meets $\S$ in the lines of a regulus which we denote $\R$. Note also that 
 $P,P^q\in \mstar$. The $q+1$ lines of $\R$ extended to $\sistar$ meet $g$ in $q+1$ points  that form 
a Baer subline  denoted $\bs$. 
 By 
 Result~\ref{subline-star}, the extension of the regulus $\R$ is $\R\star=\{X(\conjb{X})^q\st X\in g\}$. As  $\mstar$ meets each line of $\R\star$,
  $P$ lies on a line of form $X(\conjb{X})^q$, for some $X\in g$, and   $P^q\in X^q\conjb{X}$. The Baer subline $\bs$ of $g$ corresponds in $\PG(2,q^2)$ to a Baer subline $\bar \bs$ of $\li$. Further, $\bar \bs=\bar \B\cap\li$,  so ${ {\bar{\mathsf c}_{b}}}$ and 
  ${{\bar{\mathsf c}}_{\scalebox{0.55}{\mbox{$\pi$}}}}$ 
  coincide for points on $\li$, that is, ${{\bar X}^{ {\bar{\mathsf c}_{b}}}}={{\bar X}^{{\bar{\mathsf c}}_{\scalebox{0.55}{\mbox{$\pi$}}}}}$.  
  By \cite[Thm 5.4]{BJW},  $\N_2$ corresponds in $\PG(2,q^2)$ to an $\Fq$-conic $\bar\C$ in a secant Baer subplane $\bar\B$ 
 and $\bar\Cplus$ meets $\li$ in the two points $\bar X,{{\bar X}^{{\bar{\mathsf c}}_{\scalebox{0.55}{\mbox{$\pi$}}}}}$.  
 
 If $r=1$, then $\N_r$ is a line which either lies in $\si$, or meets $\si$ in one point. In either case, the weights do not sum to four, so we do not have a 2-special normal rational curve. 
 \end{proof}

%
%
%
%

\section{Conclusion}

We set out to gain a geometric understanding of why characterisations of the  Bruck-Bose representation of objects in $\PG(2,q^2)$ relate to the transversal lines of the associated regular spread. By looking at the Bruck-Bose representation as a subset of the Bose representation, we have given geometric arguments explaining this notion of specialness.

 Further, we have used geometric techniques to characterise normal rational curves of $\PG(4,q)$ that correspond to $\Fq$-conics of $\PG(2,q^2)$.  

While this article focussed on $\Fq$-conics of $\PG(2,q^2)$, these arguments can be extended to other $\Fq$-varieties of $\PG(2,q^2)$. 
We define an \emph{$\Fq$-variety $\bar \V$ of $\PG(2,q^2)$} to be a variety which  is projectively equivalent to a variety in $\PG(2,q)$. An $\Fq$-variety $\bar \V$ is contained in a unique $\Fqq$-variety, denoted $\bar {\V^\plus}$, which consists of the points of $\PG(2,q^2)$ satisfying the same homogeneous equations that define $\bar \V$.
 The techniques from Section~\ref{sec:Baer} can be generalised to show that $\Fq$-varieties yield  $g$-special sets in  the Bruck-Bose representation in $\PG(4,q)$ in the following sense. 
If $\bar {\V^\plus}\cap\li=\{\bar P_1,\ldots,\bar P_k\}$, then  in    the exact-at-infinity Bruck-Bose representation  in $\PG(4,q)$, the pointset of $[\V]$ forms a variety whose extension  meets the transversal  $g$ of the regular spread $\S$ in the points 
$\{ P_1,\ldots,P_k\}$.

\bigskip\bigskip

{\bfseries Author information}

S.G. Barwick. School of Mathematical Sciences, University of Adelaide, Adelaide, 5005, Australia.
susan.barwick@adelaide.edu.au

W.-A. Jackson. School of Mathematical Sciences, University of Adelaide, Adelaide, 5005, Australia.
wen.jackson@adelaide.edu.au

P. Wild. Royal Holloway, University of London, TW20 0EX, UK. peterrwild@gmail.com

\end{document}